\theoremstyle{plain}
\newtheorem{theorem}[equation]{Theorem}
\newtheorem{lemma}[equation]{Lemma}
\newtheorem{corollary}[equation]{Corollary}
\newtheorem{proposition}[equation]{Proposition}
\theoremstyle{definition}
\newtheorem{definition}[equation]{Definition}
\theoremstyle{remark}
\newtheorem{remark}[equation]{Remark}
\newtheorem{claim}[equation]{Claim}
\numberwithin{equation}{section}
\newcommand{\dist}{\operatorname{dist}}
\newcommand{\dv}{\operatorname{div}}
\newcommand{\re}{\mathbb{R}}
\newcommand{\rn}{\mathbb{R}^n}
\newcommand{\ree}{\mathbb{R}^{n+1}}
\newcommand{\dd}{\mathbb{D}}
\newcommand{\C}{\mathcal{C}}
\newcommand{\om}{\Omega}
\newcommand{\oT}{\widetilde{\Omega}}
\newcommand{\F}{\mathcal{F}}
\newcommand{\Qt}{\widetilde{Q}}
\newcommand{\M}{\mathcal{M}}
\newcommand{\W}{\mathcal{W}}
\newcommand{\B}{\mathcal{B}}
\newcommand{\I}{\mathcal{I}}
\newcommand{\sbf}{{\bf S}}
\newcommand{\G}{\mathcal{G}}
\newcommand{\GS}{\Gamma_{\sbf}}
\newcommand{\mut}{\mathfrak{m}}
\newcommand{\pom}{\partial\Omega}
\newcommand{\hm}{\omega}
\newcommand{\vp}{\varphi}
\renewcommand{\emptyset}{\mbox{\textup{\O}}}
\DeclareMathOperator{\diam}{diam}
\DeclareMathOperator{\interior}{int}
\begin{document}
\allowdisplaybreaks

\title[Approximation of Uniformly Rectifiable Sets]{Harmonic Measure and
Approximation of Uniformly Rectifiable Sets}

\author{Simon Bortz}

\address{Simon Bortz
\\
Department of Mathematics
\\
University of Missouri
\\
Columbia, MO 65211, USA} \email{sabh8f@mail.missouri.edu}

\author{Steve Hofmann}

\address{Steve Hofmann
\\
Department of Mathematics
\\
University of Missouri
\\
Columbia, MO 65211, USA} \email{hofmanns@missouri.edu}

\thanks{The authors were supported by NSF grant DMS-1361701.}

\date{\today}
\subjclass[2010]{28A75, 28A78, 31B05, 
42B20, 42B25, 42B37} 

\keywords{Carleson measures, 
harmonic measure, 
uniform rectifiability, NTA, chord-arc.}

\begin{abstract} 
Let $E\subset \ree$, $n\ge 1$, be a uniformly rectifiable set of dimension $n$.  
We show $E$ that has big pieces of boundaries of a class of domains which satisfy a 
2-sided corkscrew condition, and whose connected components are 
all chord-arc domains (with uniform control of the various constants). 
As a consequence, we deduce
that $E$ has big pieces of sets for which harmonic measure belongs to weak-$A_\infty$.
\end{abstract}

\maketitle

\tableofcontents

\section{Introduction}

The results in this paper grew out of a project to prove higher dimensional, quantitative
versions of the classical F. and M. Riesz Theorem \cite{Rfm}.   The latter states
that for a simply connected domain in the complex plane,
with a rectifiable boundary, harmonic measure is absolutely continuous with respect to arclength 
measure.  A quantitative version of this theorem (again in the plane) was
obtained by Lavrentiev \cite{Lav}.   We note that some connectivity hypothesis is essential to these results:
indeed, Bishop and Jones
\cite{BiJo} have  presented a counter-example
to show that the result of \cite{Rfm}
may fail in the absence of sufficient connectivity.  
Thus, roughly speaking, rectifiability plus connectivity 
implies absolute continuity, but rectifiability alone does not.

In higher dimensions, quantitative (scale-invariant) versions of the F. and M. Riesz Theorem were obtained by
Dahlberg \cite{D1} in Lipschitz domains, and by David and Jerison \cite{DJe}, and independently, by Semmes
\cite{Se}, in NTA domains with ADR (Ahlfors-David Regular) boundaries (all terminology and notation to be 
defined below).    
In these quantitative results, the conclusion is that
harmonic measure satisfies a scale invariant version of absolute continuity 
with respect to surface measure, namely that it belongs to the Muckenhoupt class $A_\infty$.
To draw the analogy with the result of \cite{Rfm} more precisely, we note that recently, in \cite{AHMNT} it
has been shown that for a domain $\Omega$ satisfying a scale invariant connectivity hypothesis (the so called 
``uniform" condition, which is a unilateral version of the NTA property), whose boundary is 
UR (Uniformly Rectifiable, a scale invariant version of rectifiability which entails, in particular, the 
ADR property), then in fact $\Omega$ is NTA, so that the result of \cite{DJe} and \cite{Se} applies.
An earlier, direct proof of the scale invariant absolute continuity of harmonic measure 
with respect to surface measure,  in a uniform domain with a UR boundary, appears in \cite{HM-I}.
The converse is also true, see \cite{HMU}.

As noted above, by the counter-example of \cite{BiJo}, such results cannot hold in the absence of
some connectivity hypothesis.  Nonetheless,
in this paper, we obtain a structure theorem for uniformly rectifiable sets of co-dimension 1,
which yields in particular that the F. and M. Riesz theorem holds for every such set $E$ 
(viewed as the boundary of an open set $\om=\ree\setminus E$), 
in a ``big pieces" sense.   
Our main result (the structure theorem) is the following
(our terminology and notation will be defined in the sequel; in particular, however, we let $\dd(E)$ denote the
collection of ``dyadic cubes" on the set $E$, as per David and Semmes \cite{DS1} and M. Christ \cite{Ch};
see Lemma \ref{lemmaCh} below).

\begin{theorem}\label{t1} Let $E\subset \ree$ be a UR (uniformly rectifiable)  set of dimension $n$.
Then for each $Q \in \mathbb{D}(E)$ there exists an open set $\oT=\oT_Q \subset \om:= \ree\setminus E$, 
with $\diam(\oT)\approx \diam(Q)$, such that $\oT$ has an
ADR (Ahlfors-David Regular) boundary, satisfies a 2-sided corkscrew condition,
and 
\begin{equation}\label{t1eq1}
\sigma(\partial\oT \cap Q) \gtrsim \sigma(Q).
\end{equation}
Moreover, each connected component of $\oT$ is an NTA domain with ADR boundary. The various NTA, ADR, and implicit 
constants are uniformly controlled, and depend only
on dimension and on the UR character of $E$.
\end{theorem}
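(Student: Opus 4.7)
\medskip
\noindent\textbf{Proof plan.} The overall strategy is to combine a bilateral Corona-type decomposition of $\mathbb{D}(E)$ with a Whitney-cube sawtooth construction on both sides of the approximating planes. Since $E$ is UR of codimension $1$, the bilateral weak geometric lemma is available: for any chosen small $\eps>0$, there is a partition of $\mathbb{D}(E)$ (outside a Carleson-packing collection of ``bad'' cubes) into coherent stopping-time regions $\mathbf{S}$, each equipped with a top cube $Q(\mathbf{S})$ and an affine $n$-plane $P_{\mathbf{S}}$, so that for every $Q' \in \mathbf{S}$,
\[
\sup_{x \in E \cap B^*_{Q'}} \dist(x, P_{\mathbf{S}}) \,+\, \sup_{y \in P_{\mathbf{S}} \cap B^*_{Q'}} \dist(y, E) \,\le\, \eps \,\ell(Q'),
\]
where $B^*_{Q'}$ is a fixed dilate of the ball centered at the ``center'' of $Q'$. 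A pigeonhole argument on the Carleson packing estimate produces, for the given cube $Q$, a stopping-time region $\mathbf{S}$ with $Q(\mathbf{S}) \subseteq Q$, $\ell(Q(\mathbf{S}))\approx \ell(Q)$, and such that the cubes of $\mathbf{S}$ lying inside $Q$ collectively cover a fixed positive fraction of $Q$ in $\sigma$-measure. This tree is the combinatorial backbone of the construction.

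Next I would build the sawtooth. Let $\mathcal{W}=\mathcal{W}(\Omega)$ be the standard Whitney decomposition of $\Omega=\mathbb{R}^{n+1}\setminus E$. For each $Q'\in \mathbf{S}$, single out those Whitney cubes $I$ whose sidelength is comparable to $\ell(Q')$ and whose distance to $Q'$ is a bounded multiple of $\ell(Q')$. Thanks to the two-sided approximation by $P_{\mathbf{S}}$, such Whitney cubes populate both sides of $P_{\mathbf{S}}$; splitting them by side into $\mathcal{W}^\pm(\mathbf{S})$, define
\[
\Omega_{\mathbf{S}}^{\pm} \,:=\, \interior\!\Bigl(\bigcup_{I\in\mathcal{W}^\pm(\mathbf{S})} I^{*}\Bigr),\qquad \widetilde{\Omega} \,:=\, \Omega_{\mathbf{S}}^{+} \cup \Omega_{\mathbf{S}}^{-},
\]
where $I^{*}$ is a mild fattening of $I$. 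The relations $\diam(\widetilde{\Omega})\approx \diam(Q)$ and $\widetilde{\Omega}\subset \Omega$ are automatic from the construction.

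Verification of the four main properties then proceeds along standard lines for sawtooth domains. ADR of $\partial\widetilde{\Omega}$ splits into two contributions: the ``horizontal'' faces of Whitney cubes that abut $E$ (whose ADR comes from that of $E$ itself, via the coherence of $\mathbf{S}$), and the ``vertical'' faces bordering Whitney cubes outside $\mathcal{W}^{\pm}(\mathbf{S})$ (whose packing is controlled by the Carleson estimate on minimal cubes of $\mathbf{S}$). The two-sided corkscrew property at points of $\partial \widetilde\Omega$ follows from the bilateral planar approximation: on each side of $P_{\mathbf{S}}$ one may inscribe a ball of radius $\approx \ell(Q)$ inside $\Omega_{\mathbf{S}}^{\pm}$. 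The big-piece bound $\sigma(\partial\widetilde{\Omega}\cap Q)\gtrsim \sigma(Q)$ comes from the fact that each $Q'\in \mathbf{S}$ with $Q'\subseteq Q$ contributes a subset of $\partial\widetilde{\Omega}\cap Q$ of surface measure comparable to $\sigma(Q')$, together with the choice of $\mathbf{S}$.

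The main obstacle, I expect, is the last clause: that each connected component of $\widetilde{\Omega}$ is chord-arc, with constants depending only on dimension and the UR character of $E$. The interior corkscrew condition for any single component is straightforward from the sawtooth construction, but the Harnack chain condition is more delicate: given two corkscrew points in the same component, one must produce a chain of balls of cardinality controlled by $\log(\text{separation}/\text{distance to }\partial)$. Such a chain is built by walking through Whitney neighbors inside $\mathcal{W}^{\pm}(\mathbf{S})$, and showing that it stays inside the component, has controlled length, and avoids ``wrapping'' obstructions created by $E$ relies crucially on the coherence of $\mathbf{S}$ and the bilateral planar approximation. A related subtlety is controlling the topology and number of components of $\widetilde{\Omega}$; while there are at least two (one on each side of $P_{\mathbf{S}}$), $E$ can cross $P_{\mathbf{S}}$ non-trivially, and any additional components produced must independently satisfy the ADR and NTA estimates with the same uniform constants.
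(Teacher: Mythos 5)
There is a genuine gap, and it sits exactly where the paper has to work hardest. Your pigeonhole claim --- that the Carleson packing estimate yields a single stopping-time regime $\sbf$ with $Q(\sbf)\subseteq Q$ whose cubes ``collectively cover a fixed positive fraction of $Q$'' in the sense needed for \eqref{t1eq1} --- does not follow. The packing condition in Lemma \ref{lemma2.1}(2) controls the total mass of the bad cubes and of the tops $Q(\sbf)$; it does not prevent every individual regime from terminating after a bounded number of generations on all of $Q$. In that scenario the sawtooth $\Omega_{\sbf}^{\pm}$ built from one regime has $\partial\Omega_{\sbf}^{\pm}\cap E$ of arbitrarily small (even zero) measure: the part of $\partial\Omega_{\sbf}^{\pm}$ contributed by a minimal cube $Q'$ of $\sbf$ is the bottom of the Whitney region $U_{Q'}$, which sits at distance $\gtrsim\eta^{1/4}\ell(Q')$ from $E$ and hence contributes nothing to $\sigma(\partial\oT\cap Q)$. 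So your closing step --- ``each $Q'\in\sbf$ contributes a subset of $\partial\oT\cap Q$ of measure $\approx\sigma(Q')$'' --- is false, and the big-pieces estimate \eqref{t1eq1}, which is the heart of the theorem, is not established. (A countable pigeonholing does show \emph{some} regime penetrates to scale zero on a set of positive measure, but with no uniform lower bound, which is what the theorem requires.)

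The paper's proof is organized precisely to overcome this: it runs an induction (``extrapolation of Carleson measures'') on the Carleson norm of the packing measure $\mut$ of \eqref{eq4.0}. Lemma \ref{lemma:Corona} extracts a family $\F$ of stopping cubes so that the sawtooth $\dd_{\F,Q_0}$ has small Carleson norm --- hence lies in a single regime and produces, via Lemma \ref{subregime sawtooths are NTA}, a pair of chord-arc domains --- while below the ``good'' stopping cubes one applies the induction hypothesis and glues the resulting sets $\oT_j$ onto $\oT_0=\om_{\F_\infty,Q_0}$, using the separation of the buffer zones $V(Q_j')$ (Lemmas \ref{separation of buffer zones} and \ref{covering lemma}) to preserve the two-sided corkscrew and ADR properties of the union. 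Estimate \eqref{t1eq1} then comes either from the uncovered set $Q_0\setminus\cup_{\F}Q_j$ (which genuinely lies on $\partial\oT_0\cap E$) or by summing the recursive contributions \eqref{little domains are a big portion}. Your proposal contains the right building blocks (bilateral corona decomposition, two-sided Whitney sawtooths, Lemma \ref{subregime sawtooths are NTA}) but is missing this gluing/induction mechanism, without which the uniform lower bound in \eqref{t1eq1} cannot be obtained. A secondary point: the corona decomposition used here approximates by Lipschitz graphs $\GS$, not merely by planes varying cube by cube; the graph version is what makes the two components $\Omega_{\sbf}^{\pm}$ well defined and Harnack-chain connected across the whole regime.
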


We remark that, in particular, Theorem \ref{t1} says that $E$ has big 
pieces of sets satisfying a 2-sided corkscrew condition, and thus, by a result of 
David and Jerison \cite{DJe} (see also \cite{DS3}), 
has ``Big Pieces of Big Pieces of Lipschitz Graphs" ($BP^2(LG)$; see Definition \ref{bpdef} below). 
Theorem \ref{t1} therefore yields as an immediate corollary the co-dimension 1 case of a result of
Azzam and Schul \cite{AS}.
\begin{corollary} Let $E\subset \ree$ be a UR (uniformly rectifiable)  set of co-dimension 1. 
Then $E\in BP^2(LG)$.
\end{corollary}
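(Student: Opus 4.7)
The plan is to derive the corollary directly from Theorem \ref{t1} by composing two ``big pieces'' relations: the one provided by Theorem \ref{t1} itself, and the classical ``big pieces of Lipschitz graphs'' (BPLG) theorem of David and Jerison \cite{DJe}. Concretely, I would fix a surface ball (or equivalently a dyadic cube) $Q \in \dd(E)$ and invoke Theorem \ref{t1} to produce the open set $\oT=\oT_Q$ whose boundary is ADR, satisfies a two-sided corkscrew condition, and captures a fixed fraction of $\sigma(Q)$ inside $Q$ via \eqref{t1eq1}. Uniform control of the ADR and corkscrew constants across all $Q$ is exactly what makes this selection scale- and location-invariant, and is thus the crucial structural input.

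Next I would apply the David--Jerison theorem (\cite{DJe}; see also \cite{DS3}), which asserts that any ADR set satisfying a two-sided corkscrew condition is BPLG, with the BPLG constants depending only on the ADR and corkscrew constants. Applying this to $\partial\oT_Q$, the output is that each such boundary is itself a BPLG set with constants that do not depend on the choice of $Q$.

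Finally, one composes: by \eqref{t1eq1}, the BPLG set $\partial\oT_Q$ occupies a uniformly positive fraction of $\sigma(Q)$ inside $Q$. Since every surface ball on $E$ is comparable to such a $Q$, this is exactly the statement that $E$ has big pieces of BPLG sets at every location and every scale, i.e.\ $E\in BP^2(LG)$ in the sense of Definition \ref{bpdef}.

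The main obstacle in this whole circle of ideas is Theorem \ref{t1} itself; the corollary is essentially bookkeeping once Theorem \ref{t1} and \cite{DJe} are available. The only point that deserves even a line of care is checking that the composition of two big-pieces relations is itself a big-pieces relation with quantitatively controlled constants, which is standard and is immediate from the uniformity asserted in Theorem \ref{t1}.
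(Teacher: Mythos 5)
Your proposal is correct and is exactly the paper's argument: the authors note that Theorem \ref{t1} gives big pieces of ADR sets satisfying the two-sided corkscrew condition, and then cite David--Jerison \cite{DJe} (see also \cite{DS3}) to conclude each such boundary is BPLG with uniform constants, whence $E\in BP^2(LG)$. No further comment is needed.
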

We should note that, in fact, the result of \cite{AS} establishes $BP^2(LG)$ for $UR$ sets in all 
co-dimensions, whereas our arguments do not address the case of co-dimension greater than 1.  On the 
other hand, in the co-dimension 1 case, our Theorem \ref{t1} yields extra structure which allows us to 
obtain estimates for harmonic measure.  More precisely, we have the following.

\begin{theorem}\label{t2} Let $E\subset \ree$ be an $n$-dimensional UR set.
Let $\om := \ree\setminus E$. 
Then $E$ has ``interior big pieces of good harmonic measure estimates" (IBP(GHME))
in the  following sense: 
for each $Q \in \dd (E)$ there exists an open set $\oT=\oT_Q \subset \om$, 
with $\diam(\oT) \approx \diam(Q)$, such that $\oT$ satisfies a strong 2-sided  
corkscrew condition along with estimate (\ref{t1eq1}), and for each surface ball
$\Delta = \Delta(x,r) := B(x,r) \cap \partial\oT$, with $x \in \partial\oT$ and $r \in (0, \diam(\oT))$, 
and with interior corkscrew point $X_\Delta$, it holds that
$\hm^{X_\Delta}:=\hm^{X_\Delta}_{\widetilde{\om}}$, the harmonic measure for $\oT$ with pole at $X_\Delta$, belongs to 
weak-$A_\infty(\Delta)$.
\end{theorem}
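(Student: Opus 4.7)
The plan is to take $\widetilde{\Omega}=\widetilde{\Omega}_Q$ exactly as produced by Theorem \ref{t1}: it already has diameter comparable to $\diam(Q)$, ADR boundary, the 2-sided corkscrew condition, the big-piece estimate \eqref{t1eq1}, and its connected components are NTA domains with ADR boundary. The ``strong'' form of the 2-sided corkscrew condition---essentially that the interior corkscrew associated with any surface ball $\Delta(x,r)$ can be selected in the unique connected component of $\widetilde{\Omega}$ whose boundary contains $x$---is built into the construction of Theorem \ref{t1}. The genuinely new content of Theorem \ref{t2} is therefore the weak-$A_\infty$ property of harmonic measure.

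Since each connected component $\Omega_*$ of $\widetilde{\Omega}$ is NTA with ADR boundary, it is a chord-arc domain with constants depending only on dimension and the UR character of $E$. The scale-invariant F.\ and M.\ Riesz theorem of David--Jerison \cite{DJe} and independently of Semmes \cite{Se} then yields that harmonic measure on $\Omega_*$ belongs to $A_\infty$ (hence, a fortiori, weak-$A_\infty$) with respect to surface measure on $\partial\Omega_*$, with uniform constants.

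To transfer this to a weak-$A_\infty$ bound on $\partial\widetilde{\Omega}$, fix a surface ball $\Delta=\Delta(x,r)\subset\partial\widetilde{\Omega}$ and its interior corkscrew $X_\Delta$. By the strong corkscrew property, $X_\Delta$ lies in the component $\Omega_*$ with $x\in\partial\Omega_*$, and consequently $\hm_{\widetilde{\Omega}}^{X_\Delta}$ is supported on $\partial\Omega_*$ and agrees with $\hm_{\Omega_*}^{X_\Delta}$ there. For any Borel $F\subset\Delta$, the $A_\infty$ estimate on $\Omega_*$ gives
\begin{equation*}
\hm_{\widetilde{\Omega}}^{X_\Delta}(F)\,=\,\hm_{\Omega_*}^{X_\Delta}(F\cap\partial\Omega_*)\,\lesssim\,\left(\frac{\sigma_{\partial\Omega_*}(F\cap\partial\Omega_*)}{\sigma_{\partial\Omega_*}(\Delta^{*})}\right)^{\!\theta},
\end{equation*}
where $\Delta^{*}$ is an appropriate surface ball in $\partial\Omega_*$ for which $X_\Delta$ serves as a corkscrew. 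Exploiting the monotonicity $\sigma_{\partial\widetilde{\Omega}}(F)\geq \sigma_{\partial\Omega_*}(F\cap\partial\Omega_*)$ and the ADR character of both $\partial\widetilde{\Omega}$ and $\partial\Omega_*$ to compare $\sigma_{\partial\Omega_*}(\Delta^{*})$ with $\sigma_{\partial\widetilde{\Omega}}(\Delta)\approx r^n$, we arrive at the desired weak-$A_\infty$ estimate.

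The main obstacle will be the calibration between the surface ball $\Delta$ on $\partial\widetilde{\Omega}$ and the ``target'' ball $\Delta^{*}$ on $\partial\Omega_*$. Although $X_\Delta$ lies in $\Omega_*$, the ball $\Delta$ may meet boundaries of several components, while $\hm^{X_\Delta}$ charges only $\partial\Omega_*$; one must show that $\Delta\cap\partial\Omega_*$ occupies enough of a controlled dilate of $\Delta$ (with a bounded loss in the comparison of surface measures) so that the $A_\infty$ estimate on $\Omega_*$ transfers to a true weak-$A_\infty$ estimate on $\widetilde{\Omega}$. This calibration is afforded by the construction of Theorem \ref{t1}, which controls the relative scales and positions of the connected components and places their associated corkscrews coherently with the local component structure.
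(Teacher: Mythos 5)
Your overall strategy coincides with the paper's: localize to the connected component containing the pole, invoke the David--Jerison/Semmes $A_\infty$ theorem on that chord-arc component, and transfer the estimate back to $\partial\oT$ using the uniform ADR bounds. However, as written the argument has two genuine gaps. First, you only produce an estimate at the single top scale $\Delta$, i.e.\ a bound of the form $\hm^{X_\Delta}(F)\lesssim(\sigma(F)/\sigma(\Delta))^\theta$. The conclusion weak-$A_\infty(\Delta)$ (Definition \ref{defAinfty}) is a statement about \emph{all} surface balls $\Delta'=B'\cap\partial\oT$ with $2B'\subseteq B$, with the pole still fixed at $X_\Delta$ and with $\hm^{X_\Delta}(2\Delta')$ on the right-hand side. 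For small $\Delta'$ the pole sits far away relative to $r_{\Delta'}$, and it is exactly here that the component structure bites: $\Delta'$ may be centered on the boundary of a component that $\hm^{X_\Delta}$ does not charge at all. The paper resolves this by proving a separate lemma (Lemma \ref{t2lemma}) for poles $Y\in\om\setminus 2B_0$, in which one picks a point $z\in A\cap\partial\om_j$ ($\om_j$ the component containing the pole), recenters to $\Delta_\star=B(z,r')\cap\partial\om_j\subset 2\Delta'$, and uses uniform ADR to compare $\sigma(\Delta_\star)\approx\sigma(\Delta')$ --- this recentering is precisely why the conclusion is weak-$A_\infty$ rather than $A_\infty$. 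One then recovers the statement for the corkscrew pole $X_\Delta$ (which lies \emph{inside} $B$, not outside $2B$) by covering $B(x,r)$ with boundedly many balls $B^i$ of radius $\eta r$ for which $X_\Delta$ lies outside $2B^i_1$, and summing. Neither the recentering nor the covering step appears in your proposal; you flag the ``calibration'' as the main obstacle and then assert it is ``afforded by the construction of Theorem \ref{t1},'' which is not a proof --- and in fact the paper's resolution uses nothing specific to that construction, only the general structure of a union of uniform chord-arc components with globally ADR boundary.

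Second, your claim that the strong 2-sided corkscrew condition forces $X_\Delta$ to lie in the component $\Omega_*$ with $x\in\partial\Omega_*$ is not justified by Definition \ref{def3.cork} (which only asserts that one corkscrew ball lies in $\oT$), and it is actually false for the sets built in Theorem \ref{t1}: at large scales relative to a small component $\oT_j$, the interior corkscrew for $x\in\partial\oT_j$ is taken in $\oT_0^\pm$, whose boundary need not contain $x$. Fortunately this hypothesis is not needed once one recenters at a point of $A\cap\partial\Omega_*$ as above; but as stated your transfer step leans on it.
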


Thus, every UR set of co-dimension 1 has big pieces of sets satisfying a quantitative, scale 
invariant F. and M. Riesz Theorem.   We remark that this fact actually characterizes uniformly rectifiable sets of
co-dimension 1, as the second named author will show in a forthcoming joint paper with J. M. Martell \cite{HM-IV}.

For the sake of clarity, we note that in the statements of Theorems \ref{t1} and \ref{t2}, we use the notation
$$\Omega:= \ree\setminus E\,,$$ 
where $E\subset \ree$ is in particular
an $n$-dimensional ADR set (hence closed);
thus $\Omega$ is open, but need not be a connected domain.  The open set 
$\widetilde{\om}\subset \Omega$ in Theorem \ref{t2} is the one that we construct in Theorem \ref{t1}.

We remark that the weak-$A_\infty$ conclusion of Theorem \ref{t2} is in the nature of best possible.
Indeed, fix $X\in \Omega$, let $\hat{x}\in E$ be such that
$|X-\hat{x}|=\dist(X,E)=:\delta(X)$, and consider the ball $B_X:= B(\hat{x},10\delta(X))$,
and corresponding surface ball $\Delta_X:= B_X\cap E$.  Choose $Q\in\dd(E)$ such that
$\diam(Q)\approx \delta(X)$, with $Q\subset \Delta_X$.  Our construction in the proof of Theorem
\ref{t1} will yield that
$X \in \widetilde{\om}_Q$, and in fact is a Corkscrew point  for a surface ball 
$\widetilde{\Delta}\subset\partial \widetilde{\om}_Q$, of radius $r\approx \delta(X)$, which contains
$Q\cap\partial\widetilde{\om}_Q$.  Consequently, if $\hm^X_{\widetilde{\om}}$ were in 
$A_\infty(\widetilde{\Delta})$ (rather than merely weak-$A_\infty$), 
then by the maximum principle, letting $\hm^X$ denote harmonic measure for $\om$,
we would have
\begin{equation}\label{eq1.5*}
A\subset \Delta_X, \,\, H^n(A)\geq (1-\eta)H^n(\Delta_X) \,\,\implies\,\, \hm^X(A)\geq
\hm^X_{\widetilde{\om}}(A\cap \widetilde{\Delta})\geq c>0\,,
\end{equation}
for some uniform positive constant $c$, provided that $\eta\in (0,1)$ was sufficiently small depending only on
the ADR constants for $E$ and for $\partial\widetilde{\om}$, and the
implicit constant in \eqref{t1eq1}.  In turn, by the result of \cite{BL}, \eqref{eq1.5*} for every
$X\in \om$ would then imply that
$\hm^X\in$ weak-$A_\infty(\Delta_X)$, which cannot hold in general, by the example of \cite{BiJo}.

Let us now define the terms used in the statements of our theorems.
Most of the following notions have meaning in co-dimensions greater than 1, but here
we shall discuss only
the co-dimension 1 case that is of interest to us in the present work. 

\begin{definition}\label{defadr} ({\bf  ADR})  (aka {\it Ahlfors-David regular}).
We say that a  set $E \subset \ree$, of Hausdorff dimension $n$, is ADR
if it is closed, and if there is some uniform constant $C$ such that
\begin{equation} \label{eq1.ADR}
\frac1C\, r^n \leq \sigma\big(\Delta(x,r)\big)
\leq C\, r^n,\quad\forall r\in(0,\diam (E)),\ x \in E,
\end{equation}
where $\diam(E)$ may be infinite.
Here, $\Delta(x,r):= E\cap B(x,r)$ is the ``surface ball" of radius $r$,
and $\sigma:= H^n|_E$ 
is the ``surface measure" on $E$, where $H^n$ denotes $n$-dimensional
Hausdorff measure.
\end{definition}

\begin{definition}\label{defur} ({\bf UR}) (aka {\it uniformly rectifiable}).
An $n$-dimensional ADR (hence closed) set $E\subset \ree$
is UR if and only if it contains ``Big Pieces of
Lipschitz Images" of $\rn$ (``BPLI").   This means that there are positive constants $\theta$ and
$M_0$, such that for each
$x\in E$ and each $r\in (0,\diam (E))$, there is a
Lipschitz mapping $\rho= \rho_{x,r}: \rn\to \ree$, with Lipschitz constant
no larger than $M_0$,
such that 
$$
H^n\Big(E\cap B(x,r)\cap  \rho\left(\{z\in\rn:|z|<r\}\right)\Big)\,\geq\,\theta\, r^n\,.
$$
\end{definition}

We recall that $n$-dimensional rectifiable sets are characterized by the
property that they can be
covered, up to a set of
$H^n$ measure 0, by a countable union of Lipschitz images of $\rn$;
we observe that BPLI  is a quantitative version
of this fact.

We remark
that, at least among the class of ADR sets, the UR sets
are precisely those for which all ``sufficiently nice" singular integrals
are $L^2$-bounded  \cite{DS1}.    In fact, for $n$-dimensional ADR sets
in $\ree$, the $L^2$ boundedness of certain special singular integral operators
(the ``Riesz Transforms"), suffices to characterize uniform rectifiability (see \cite{MMV} for the case $n=1$, and 
\cite{NToV} in general). 
We further remark that
there exist sets that are ADR (and that even form the boundary of a domain satisfying 
interior Corkscrew and Harnack Chain conditions),
but that are totally non-rectifiable (e.g., see the construction of Garnett's ``4-corners Cantor set"
in \cite[Chapter1]{DS2}).  Finally, we mention that there are numerous other characterizations of UR sets
(many of which remain valid in higher co-dimensions); cf. \cite{DS1,DS2}.

\begin{definition} \label{bpdef} ({\bf $BP(\mathcal{S})$ and $BP^2(LG)$}). 
Let $\mathcal{S}$ be a collection of subsets of $\ree$. We say an $n$-dimensional $ADR$ set $E \subset \ree$ has big pieces of $\mathcal{S}$ (``$E \in BP(\mathcal{S})$") if there exists a positive constant $\theta$ 
such that for each $x \in E$ and $r \in (0,\diam(E))$, there is a set $S \in \mathcal{S}$ with
$$H^n(B(x,r) \cap E \cap S) \ge \theta \  r^n.$$

A Lipschitz graph in $\ree$ is a set of the form 
$$\{ y + \rho(y) : y \in P\} $$
where $P$ is an $n$-plane and $\rho$ is a Lipschitz mapping onto a line perpendicular to P.
We say that $E$ has big pieces of Lipschitz graphs (``$BP(LG)$'') if there exists a positive constant $M_0$ such that $E \in BP(\mathcal{S})$, where $\mathcal{S}$ is the collection of all Lipschitz graphs with Lipschitz constant no greater than $M_0$.

Finally, if $E$ has $BP(\mathcal{S})$, where $\mathcal{S}$ is a collection of sets satisfying $BP(LG)$, with 
uniform bounds on the various constants, then we say that $E \in BP^2(LG)$.

\end{definition}

\begin{definition}\label{defurchar} ({\bf ``UR character"}).   Given a UR set $E\subset \ree$, its ``UR character"
is just the pair of constants $(\theta,M_0)$ involved in the definition of uniform rectifiability,
along with the ADR constant; or equivalently,
the quantitative bounds involved in any particular characterization of uniform rectifiability.
\end{definition}

\subsection{Further Notation and Definitions}

\begin{list}{$\bullet$}{\leftmargin=0.4cm  \itemsep=0.2cm}

\item We use the letters $c,C$ to denote harmless positive constants, not necessarily
the same at each occurrence, which depend only on dimension and the
constants appearing in the hypotheses of the theorems (which we refer to as the
``allowable parameters'').  We shall also
sometimes write $a\lesssim b$ and $a \approx b$ to mean, respectively,
that $a \leq C b$ and $0< c \leq a/b\leq C$, where the constants $c$ and $C$ are as above, unless
explicitly noted to the contrary.  At times, we shall designate by $M$ a particular constant whose value will remain unchanged throughout the proof of a given lemma or proposition, but
which may have a different value during the proof of a different lemma or proposition.

\item Given a closed set $E \subset \ree$, we shall
use lower case letters $x,y,z$, etc., to denote points on $E$, and capital letters
$X,Y,Z$, etc., to denote generic points in $\ree$ (especially those in $\ree\setminus E$).

\item The open $(n+1)$-dimensional Euclidean ball of radius $r$ will be denoted
$B(x,r)$ when the center $x$ lies on $E$, or $B(X,r)$ when the center
$X \in \ree\setminus E$.  A ``surface ball'' is denoted
$\Delta(x,r):= B(x,r) \cap E.$

\item Given a Euclidean ball $B$ or surface ball $\Delta$, its radius will be denoted
$r_B$ or $r_\Delta$, respectively.

\item Given a Euclidean or surface ball $B= B(X,r)$ or $\Delta = \Delta(x,r)$, its concentric
dilate by a factor of $\kappa >0$ will be denoted
$\kappa B := B(X,\kappa r)$ or $\kappa \Delta := \Delta(x,\kappa r).$

\item Given a (fixed) closed set $E \subset \ree$, for $X \in \ree$, we set $\delta(X):= \dist(X,E)$.

\item We let $H^n$ denote $n$-dimensional Hausdorff measure, and let
$\sigma := H^n\big|_{E}$ denote the ``surface measure'' on a closed set $E$
of co-dimension 1.

\item For a Borel set $A\subset \ree$, we let $1_A$ denote the usual
indicator function of $A$, i.e. $1_A(x) = 1$ if $x\in A$, and $1_A(x)= 0$ if $x\notin A$.

\item For a Borel set $A\subset \ree$,  we let $\interior(A)$ denote the interior of $A$.


\item We shall use the letter $I$ (and sometimes $J$)
to denote a closed $(n+1)$-dimensional Euclidean dyadic cube with sides
parallel to the co-ordinate axes, and we let $\ell(I)$ denote the side length of $I$.
If $\ell(I) =2^{-k}$, then we set $k_I:= k$.
Given an ADR set $E\subset \ree$, we use $Q$ to denote a dyadic ``cube''
on $E$.  The
latter exist (cf. \cite{DS1}, \cite{Ch}), and enjoy certain properties
which we enumerate in Lemma \ref{lemmaCh} below.

\end{list}

\begin{definition} ({\bf Corkscrew point}).  \label{def1.cork}
Following
\cite{JK}, given an open set $\Omega\subset \ree$, and a ball $B=B(x,r)$, with
$x\in \partial\Omega$ and
$0<r<\diam(\partial\Omega)$, we say that a point $X = X_B \in \om$ is a
 {\it Corkscrew point relative to $B$ with constant} $c>0$, if
there is a ball
$B(X,cr)\subset B(x,r)\cap\Omega$.  
\end{definition}

\begin{definition}\label{def2.cork}({\bf 2-sided Corkscrew condition}).  We say that an 
open set $\om$ satisfies the 2-sided Corkscrew condition 
if for some uniform constant $c > 0$ (the ``Corkscrew constant"),
and for every $x \in \partial\om$ and $0 < r < \diam(\pom)$, there are 
two Corkscrew points $X_1$ and $X_2$ relative to $B(x,r)$, with constant $c$  (as in Definition \ref{def1.cork}),
and two distinct connected components of $\rn \setminus \partial\om$, 
$\mathcal{O}_1$ and $\mathcal{O}_2$, with $B_1 = B(X_1, cr)\subset\mathcal{O}_1$ and $
B_2 = B(X_2, cr)\subset\mathcal{O}_2$.
We recall that this property is called ``Condition B" in the work of
David and Semmes \cite{DS3} .  We refer to the balls $B_1$ and $B_2$ as Corkscrew balls.
\end{definition}

\begin{definition}\label{def3.cork}({\bf Strong 2-sided Corkscrew condition}).
We say that an open set $\om$ satisfies the strong 2-sided Corkscrew condition 
if $\om$ satisfies the 2-sided Corkscrew condition, 
and one of the balls
$B_1 \subset \om$ or $B_2 \subset \om$.
\end{definition}

\begin{definition}({\bf Harnack Chain condition}).  \label{def1.hc} Following \cite{JK}, we say
that $\Omega$ satisfies the Harnack Chain condition if there is a uniform constant $C$ such that
for every $\rho >0,\, \Lambda\geq 1$, and every pair of points
$X,X' \in \Omega$ with $\delta(X),\,\delta(X') \geq\rho$ and $|X-X'|<\Lambda\,\rho$, there is a chain of
open balls
$B_1,\dots,B_N \subset \Omega$, $N\leq C(\Lambda)$,
with $X\in B_1,\, X'\in B_N,$ $B_k\cap B_{k+1}\neq \emptyset$
and $C^{-1}\diam (B_k) \leq \dist (B_k,\partial\Omega)\leq C\diam (B_k).$  The chain of balls is called
a ``Harnack Chain''.
\end{definition}

\begin{definition}({\bf NTA}). \label{def1.nta} Again following \cite{JK}, we say that a
domain $\Omega\subset \ree$ is NTA (``Non-tangentially accessible") if it satisfies the
Harnack Chain condition, and  the strong 2-sided Corkscrew condition.
\end{definition}

\begin{definition}({\bf Chord-arc domain}). \label{def1.ca}  An NTA domain with an ADR boundary is said to be
a Chord-arc domain.
\end{definition}

\begin{definition}\label{defAinfty}
({\bf $A_\infty$ and weak-$A_\infty$}). \label{ss.ainfty}
Given an ADR set $E\subset\ree$, 
and a surface ball
$\Delta_0:= B_0 \cap E$,
we say that a Borel measure $\mu$ defined on $E$ belongs to
$A_\infty(\Delta_0)$ if there are positive constants $C$ and $\theta$
such that for each surface ball $\Delta = B\cap E$, with $B\subseteq B_0$,
we have
\begin{equation}\label{eq1.wainfty}
\mu (F) \leq C \left(\frac{\sigma(F)}{\sigma(\Delta)}\right)^\theta\,\mu (\Delta)\,,
\qquad \mbox{for every Borel set } F\subset \Delta\,.
\end{equation}
Similarly, $\mu \in$ weak-$A_\infty(\Delta_0)$, with $\Delta_0 = B_0 \cap \pom$, if for every $\Delta = B \cap \pom$ with $2B \subseteq B_0$ we have
\begin{equation}\label{eq1.weakainfty}
\mu (F) \leq C \left(\frac{\sigma(F)}{\sigma(\Delta)}\right)^\theta\,\mu (2\Delta)\,,
\qquad \mbox{for every Borel set } F\subset \Delta\,.
\end{equation}
In the case that $\mu =\hm$ is harmonic measure for an open set $\Omega$ satisfying an interior 
Corkscrew condition, setting $E=\pom$,
we shall say that $\hm$ belongs to $A_\infty$
(resp., weak-$A_\infty$), if for every surface ball $\Delta_0$, and for
any Corkscrew point $X_{\Delta_0}\in \Omega$ relative to $\Delta_0$,
harmonic measure $\hm^{X_{\Delta_0}}$,
with pole at $X_{\Delta_0}$, belongs to $A_\infty(\Delta_0)$ (resp., weak-$A_\infty(\Delta_0)$), 
in the sense above.
\end{definition}

\begin{definition} \label{ibpdef} ($IBP(GHME)$).
When the collection $\mathcal{S}$ in Definition \ref{bpdef}
consists of boundaries of domains $\oT\subset \ree\setminus E$,
for which the associated harmonic measures belong to weak-$A_\infty$, and if the various boundaries
$\{\partial\oT\}$ are ADR, with uniform control of the ADR and weak-$A_\infty$ constants, then we say that
$E$ has ``interior big pieces of good harmonic measure estimates", and we write $E\in IBP(GHME)$.
\end{definition}

\begin{lemma}\label{lemmaCh}\textup{({\bf Existence and properties of the ``dyadic grid''})
\cite{DS1,DS2}, \cite{Ch}.}
Suppose that $E\subset \ree$ is closed $n$-dimensional ADR set.  Then there exist
constants $ a_0>0,\, \gamma>0$ and $C_1<\infty$, depending only on dimension and the
ADR constant, such that for each $k \in \mathbb{Z},$
there is a collection of Borel sets (``cubes'')
$$
\mathbb{D}_k:=\{Q_{j}^k\subset E: j\in \mathfrak{I}_k\},$$ where
$\mathfrak{I}_k$ denotes some (possibly finite) index set depending on $k$, satisfying

\begin{list}{$(\theenumi)$}{\usecounter{enumi}\leftmargin=.8cm
\labelwidth=.8cm\itemsep=0.2cm\topsep=.1cm
\renewcommand{\theenumi}{\roman{enumi}}}

\item $E=\cup_{j}Q_{j}^k\,\,$ for each
$k\in{\mathbb Z}$.

\item If $m\geq k$ then either $Q_{i}^{m}\subset Q_{j}^{k}$ or
$Q_{i}^{m}\cap Q_{j}^{k}=\emptyset$.

\item For each $(j,k)$ and each $m<k$, there is a unique
$i$ such that $Q_{j}^k\subset Q_{i}^m$.

\item $\diam\big(Q_{j}^k\big)\leq C_1 2^{-k}$.

\item Each $Q_{j}^k$ contains some ``surface ball'' $\Delta \big(x^k_{j},a_02^{-k}\big):=
B\big(x^k_{j},a_02^{-k}\big)\cap E$.

\item $H^n\big(\big\{x\in Q^k_j:{\rm dist}(x,E\setminus Q^k_j)\leq \varrho \,2^{-k}\big\}\big)\leq
C_1\,\varrho^\gamma\,H^n\big(Q^k_j\big),$ for all $k,j$ and for all $\varrho\in (0,a_0)$.
\end{list}
\end{lemma}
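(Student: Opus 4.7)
The plan is to follow the well-known construction of M. Christ, which builds the dyadic grid on $E$ from a hierarchy of maximal nets together with a parent-assignment procedure, and then to verify the small-boundary estimate (vi) using ADR.

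For each $k \in \mathbb{Z}$, I would first choose a maximal subset $\{x_j^k\}_{j \in \mathfrak{I}_k}\subset E$ such that $|x_i^k - x_j^k|\geq 2^{-k}$ for $i\neq j$; maximality forces $E \subset \bigcup_j B(x_j^k, 2^{-k})$. Then I would inductively select a parent map $\pi_k : \mathfrak{I}_{k+1} \to \mathfrak{I}_k$ sending each $(k+1)$-center to a closest $k$-center (breaking ties by a fixed rule), producing an infinite tree of ancestry. Each point $y \in E$ then inherits a consistent sequence of ancestors, and one sets $Q_j^k$ to be (the closure of) the set of $y$ whose scale-$k$ ancestor is $x_j^k$. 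Properties (i)--(iii) are then built into the construction from the tree structure; (iv) follows from the telescoping bound $\mathrm{diam}\, Q_j^k \leq \sum_{m\geq k} C\,2^{-m} \leq C_1\,2^{-k}$ via the parent-distance estimate; and (v) follows by observing that points in $B(x_j^k, c\, 2^{-k})$, with $c$ small enough compared to the net separation, must have $x_j^k$ as their unique nearest scale-$k$ center and, consistently, as their scale-$k$ ancestor, and hence lie in $Q_j^k$, while the ADR lower bound ensures this surface ball carries the correct mass.

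The main obstacle is the small-boundary estimate (vi), since a naive Voronoi-type assignment can yield fractal-like cube boundaries carrying disproportionate $\sigma$-mass. To handle this I would modify the construction by fixing in advance a sequence of small auxiliary displacements at each scale (Christ's probabilistic tie-breaking device) and then show that any $y \in Q_j^k$ lying within distance $\varrho\, 2^{-k}$ of $E\setminus Q_j^k$ must be nearly equidistant, at some scale $m\geq k$, from its own scale-$m$ ancestor center and from a competing scale-$m$ center. The $\sigma$-measure of the set of such nearly equidistant points at scale $m$ is controlled by applying the ADR upper bound to a $(\varrho\, 2^{-k})$-tubular neighborhood of a bisecting hyperplane inside a ball of radius $2^{-m}$; this yields a contribution of the form $C\, (\varrho\, 2^{m-k})^{\beta}\, 2^{-mn}$ for an appropriate $\beta > 0$. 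Summing the resulting geometric series over $m\geq k$ (where the convergence is forced by the exponential decrease of the competing-center encounters across scales) and dividing by the ADR lower bound $\sigma(Q_j^k)\gtrsim 2^{-kn}$ gives the desired estimate with some $\gamma > 0$ depending only on dimension and the ADR constant.

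Throughout these arguments, every constant produced, namely $a_0$, $\gamma$, and $C_1$, is tracked to depend only on $n$ and on the ADR constant of $E$, as required; the hard part is genuinely the quantitative small-boundary estimate, while the tree-based properties (i)--(v) follow from a bookkeeping exercise on the ancestry structure.
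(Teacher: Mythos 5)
This lemma is not proved in the paper: it is quoted verbatim from David--Semmes \cite{DS1,DS2} and Christ \cite{Ch}, so the only fair comparison is with the construction in those references, which your outline for properties (i)--(v) does follow faithfully (maximal $2^{-k}$-separated nets, a parent map chosen by proximity, cubes defined as ancestry classes, the telescoping diameter bound for (iv), and the ``unique nearest center'' argument plus the ADR lower bound for (v)). That part of your sketch is correct and is essentially the standard route.

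However, your proposed proof of the small-boundary estimate (vi) contains a genuine gap. You propose to bound the measure of the set of points nearly equidistant from two competing scale-$m$ centers by ``applying the ADR upper bound to a $(\varrho\,2^{-k})$-tubular neighborhood of a bisecting hyperplane inside a ball of radius $2^{-m}$,'' claiming a bound of the form $C(\varrho\,2^{m-k})^{\beta}2^{-mn}$ with $\beta>0$. No such gain is available for a general $n$-dimensional ADR set $E\subset\ree$: the upper ADR bound applied to a slab of thickness $s$ inside a ball of radius $R$ only yields $\sigma(\mathrm{slab}\cap B_R)\lesssim R^n$ (cover the slab by $\sim (R/s)^n$ balls of radius $s$, each contributing $\lesssim s^n$), and this is sharp, since $E$ may locally coincide with the bisecting hyperplane itself, in which case the full mass $\approx R^n$ sits inside an arbitrarily thin slab. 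So the geometric series you propose to sum does not converge to anything small, and the argument collapses. The actual proof of (vi) in \cite{Ch} (and in \cite{DS1,DS2} in the ADR setting) is purely metric-measure-theoretic and uses no flatness whatsoever: one sets $A_s:=\{x\in Q:\dist(x,E\setminus Q)\le a_1 2^{-(k+s)}\}$, covers $A_{s+1}$ by the generation-$(k+s)$ descendants $R$ of $Q$ that it meets, notes that each such $R$ is contained in $A_{s-L}$ for a fixed $L$ while its central surface ball from property (v) (of measure $\ge c\,\sigma(R)$ by ADR) is disjoint from $A_{s+1}$, and concludes $\sigma(A_{s+1})\le(1-c)\,\sigma(A_{s-L})$; iterating gives geometric decay in $s$ and hence the power bound $\varrho^\gamma$. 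You would need to replace your hyperplane argument with this iteration (or an equivalent) for the lemma to be established. As a minor additional point, Christ's tie-breaking is a fixed deterministic selection, not a probabilistic device; randomized dyadic grids are a later refinement and are not needed here.
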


A few remarks are in order concerning this lemma.

\begin{list}{$\bullet$}{\leftmargin=0.4cm  \itemsep=0.2cm}

\item In the setting of a general space of homogeneous type, this lemma has been proved by Christ
\cite{Ch}, with the
dyadic parameter $1/2$ replaced by some constant $\delta \in (0,1)$.
In fact, one may always take $\delta = 1/2$ (see  \cite[Proof of Proposition 2.12]{HMMM}).
In the presence of the Ahlfors-David
property (\ref{eq1.ADR}), the result already appears in \cite{DS1,DS2}.

\item  For our purposes, we may ignore those
$k\in \mathbb{Z}$ such that $2^{-k} \gtrsim {\rm diam}(E)$, in the case that the latter is finite.

\item  We shall denote by  $\mathbb{D}=\mathbb{D}(E)$ the collection of all relevant
$Q^k_j$, i.e., $$\mathbb{D} := \cup_{k} \mathbb{D}_k,$$
where, if $\diam (E)$ is finite, the union runs
over those $k$ such that $2^{-k} \lesssim  {\rm diam}(E)$.

\item Properties $(iv)$ and $(v)$ imply that for each cube $Q\in\mathbb{D}_k$,
there is a point $x_Q\in E$, a Euclidean ball $B(x_Q,r)$ and a surface ball
$\Delta(x_Q,r):= B(x_Q,r)\cap E$ such that
$r\approx 2^{-k} \approx {\rm diam}(Q)$
and \begin{equation}\label{cube-ball}
\Delta(x_Q,r)\subset Q \subset \Delta(x_Q,Cr),\end{equation}
for some uniform constant $C$.
We shall denote this ball and surface ball by
\begin{equation}\label{cube-ball2}
B_Q:= B(x_Q,r) \,,\qquad\Delta_Q:= \Delta(x_Q,r),\end{equation}
and we shall refer to the point $x_Q$ as the ``center'' of $Q$.

\item For each cube $Q \in \mathbb{D}$, we let $X_Q$ be a corkscrew point relative to $B_Q$, 
and refer to this as a corkscrew point relative to $Q$. Such a corkscrew point exists, since $E$ is  $n$-dimensional ADR
(with the constant $c$ in Definition \ref{def1.cork} depending only on dimension and the ADR constants).

\item For a dyadic cube $Q\in \mathbb{D}_k$, we shall
set $\ell(Q) = 2^{-k}$, and we shall refer to this quantity as the ``length''
of $Q$.  Evidently, $\ell(Q)\approx \diam(Q).$

\item For a dyadic cube $Q \in \mathbb{D}$, we let $k(Q)$ denote the ``dyadic generation''
to which $Q$ belongs, i.e., we set  $k = k(Q)$ if
$Q\in \mathbb{D}_k$; thus, $\ell(Q) =2^{-k(Q)}$.

\end{list}

\section{A bilateral corona decomposition and corona type approximation by chord arc domains}\label{s2}\label{s3}


In this section, we state a bilateral variant of the ``corona decomposition" of David and Semmes
\cite{DS1,DS2}. The bilateral version was proved in \cite{HMM2}, Lemma 2.2. 
We first recall the notions of ``coherency'' and ``semi-coherency'':

\begin{definition}\cite{DS2}.\label{d3.11}   
Let $\sbf\subset \dd(E)$. We say that $\sbf$ is
``coherent" if the following conditions hold:
\begin{itemize}\itemsep=0.1cm

\item[$(a)$] $\sbf$ contains a unique maximal element $Q(\sbf)$ which contains all other elements of $\sbf$ as subsets.

\item[$(b)$] If $Q$  belongs to $\sbf$, and if $Q\subset \widetilde{Q}\subset Q(\sbf)$, then $\widetilde{Q}\in {\bf S}$.

\item[$(c)$] Given a cube $Q\in \sbf$, either all of its children belong to $\sbf$, or none of them do.

\end{itemize}
We say that $\sbf$ is ``semi-coherent'' if only conditions $(a)$ and $(b)$ hold. 
\end{definition}

\smallskip

The bilateral ``corona decomposition'' is as follows.
\begin{lemma}\label{lemma2.1} \cite[Lemma 2.2]{HMM2}. Suppose that $E\subset \ree$ is $n$-dimensional UR.  Then given any positive constants
$\eta\ll 1$
and $K\gg 1$, there is a disjoint decomposition
$\dd(E) = \G\cup\B$, satisfying the following properties.
\begin{enumerate}
\item  The ``Good"collection $\G$ is further subdivided into
disjoint stopping time regimes, such that each such regime {\bf S} is coherent (cf. Definition \ref{d3.11}).

\item The ``Bad" cubes, as well as the maximal cubes $Q(\sbf)$ satisfy a Carleson
packing condition:
$$\sum_{Q'\subset Q, \,Q'\in\B} \sigma(Q')
\,\,+\,\sum_{\sbf: Q(\sbf)\subset Q}\sigma\big(Q(\sbf)\big)\,\leq\, C_{\eta,K}\, \sigma(Q)\,,
\quad \forall Q\in \dd(E)\,.$$
\item For each $\sbf$, there is a Lipschitz graph $\Gamma_{\sbf}$, with Lipschitz constant
at most $\eta$, such that, for every $Q\in \sbf$,
\begin{equation}\label{eq2.2a}
\sup_{x\in \Delta_Q^*} \dist(x,\Gamma_{\sbf} )\,
+\,\sup_{y\in B_Q^*\cap\Gamma_{\sbf}}\dist(y,E) < \eta\,\ell(Q)\,,
\end{equation}
where $B_Q^*:= B(x_Q,K\ell(Q))$ and $\Delta_Q^*:= B_Q^*\cap E$.
\end{enumerate}
\end{lemma}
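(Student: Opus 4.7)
The plan is to derive the bilateral decomposition from the one-sided corona decomposition of David--Semmes by a second stopping-time refinement, whose extra Carleson packing is controlled through the bilateral $\beta$-number characterization of uniform rectifiability.

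As a first step, I would apply the unilateral David--Semmes corona decomposition (\cite{DS2}, Chapter~3) with auxiliary parameters $\eta_0\ll \eta$ and $K_0\gg K$. This produces a partition $\dd(E) = \G_0 \cup \B_0$, with $\G_0$ split into coherent stopping-time regimes $\sbf_0$, each equipped with a Lipschitz graph $\Gamma_{\sbf_0}$ of Lipschitz constant at most $\eta_0$, such that for every $Q\in\sbf_0$,
\[
\sup_{x\in B(x_Q, K_0\ell(Q))\cap E} \dist(x,\Gamma_{\sbf_0}) \,\leq\, \eta_0\,\ell(Q),
\]
and such that $\B_0$ together with the family $\{Q(\sbf_0)\}$ satisfies a Carleson packing condition with constant depending on $\eta_0,K_0$. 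This already furnishes the first sup term in \eqref{eq2.2a}, with slack to spare.

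Within each $\sbf_0$ I would then execute a second stopping-time construction to recover the reverse estimate. Call $Q\in \sbf_0$ \emph{bilaterally bad} if
\[
\sup_{y\in B(x_Q,K\ell(Q))\cap \Gamma_{\sbf_0}} \dist(y,E) \,\geq\, \eta\,\ell(Q),
\]
and let $\mathfrak{F}(\sbf_0)$ be the collection of such cubes that are maximal in $\sbf_0$. I would discard the cubes strictly below each $Q^*\in \mathfrak{F}(\sbf_0)$ from the current regime, re-run the construction below each child of each $Q^*$, and absorb any top cube that is itself bilaterally bad into the new ``bad'' family $\B$. Each sub-regime $\sbf$ produced this way is automatically coherent, inherits the graph $\Gamma_{\sbf}:=\Gamma_{\sbf_0}$, and, provided $\eta_0$ is chosen small compared with $\eta$, satisfies \eqref{eq2.2a} in both directions.

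The main obstacle, and really the only nontrivial step, is the Carleson packing for the new maximal cubes $\bigcup_{\sbf_0}\mathfrak{F}(\sbf_0)$. For each such $Q^*$ there is a point $y^*\in B(x_{Q^*}, K\ell(Q^*))\cap \Gamma_{\sbf_0}$ at distance at least $\eta\,\ell(Q^*)$ from $E$; combined with the one-sided trapping of $E$ near $\Gamma_{\sbf_0}$ at scale $K_0\ell(Q^*)$ (inherited from the parent of $Q^*$, which still belongs to $\sbf_0$) and the smallness of the Lipschitz constant $\eta_0$ of the graph, this forces the bilateral approximation number $b\beta_{\infty}^{E}(z,r)$ of $E$, at some point $z$ near $y^*$ and scale $r\approx \eta\,\ell(Q^*)$, to exceed a constant depending only on $\eta$. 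The Carleson measure estimate on bilateral $\beta$-numbers for UR sets (\cite{DS1,DS2}), together with a standard bounded-overlap argument, then yields
\[
\sum_{Q^*\subset Q,\ Q^*\in \bigcup_{\sbf_0}\mathfrak{F}(\sbf_0)} \sigma(Q^*) \,\lesssim\, \sigma(Q)\qquad \forall\,Q\in\dd(E).
\]
Combining this with the packings of $\B_0$ and $\{Q(\sbf_0)\}$ inherited from the unilateral step yields the lemma.
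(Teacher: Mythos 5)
This lemma is quoted from \cite{HMM2} (Lemma 2.2 there) and the present paper gives no proof of it, so there is no internal argument to compare against; your proposal is, in essence, a reconstruction of the proof in \cite{HMM2}, which likewise combines the unilateral David--Semmes corona decomposition with the bilateral weak geometric lemma for UR sets. Your key step is sound: for a bilaterally bad $Q^*$ one finds $y^*\in\Gamma_{\sbf_0}\cap B(x_{Q^*},K\ell(Q^*))$ with $\dist(y^*,E)\geq\eta\ell(Q^*)$, and since $E\cap B(x_{Q^*},K_0\ell(Q^*))$ is unilaterally trapped near the $\eta_0$-Lipschitz graph while $E$ is ADR, any competitor plane for the bilateral $\beta$-number at a nearby point and comparable scale is forced (by the spanning property of the ADR piece of $E$) to be nearly parallel to $\Gamma_{\sbf_0}$ and hence to pass through the $E$-free ball around $y^*$; the BWGL packing plus bounded overlap then controls $\sum\sigma(Q^*)$. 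Two small repairs are needed. First, $Q^*$ itself cannot remain at the bottom of the truncated regime (it violates \eqref{eq2.2a}); it must be assigned to $\B$, and note also that the relevant scale for the $\beta$-number is $r\approx\dist(y^*,E)$, which a priori ranges over $[\eta\ell(Q^*),CK\ell(Q^*)]$ rather than being $\approx\eta\ell(Q^*)$ (harmless for the bounded-overlap count, or avoidable by sliding $y^*$ along $\Gamma_{\sbf_0}$ to a point at distance exactly $\tfrac12\eta\ell(Q^*)$ from $E$). Second, truncating a coherent regime along the stopping family $\mathfrak{F}(\sbf_0)$ yields only a \emph{semi-coherent} family: property $(c)$ of Definition \ref{d3.11} fails at any cube having one child in $\mathfrak{F}(\sbf_0)\cup\B$ and another not. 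One must therefore further split into maximal coherent subregimes; the extra maximal cubes so created are siblings or children of stopping/bad cubes, so by ADR their total measure is controlled by the packing already established for $\mathfrak{F}(\sbf_0)$, $\B_0$ and $\{Q(\sbf_0)\}$. With these adjustments the argument is correct and matches the cited proof.
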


In this section, we construct the same domains as in \cite{HMM2}, for each stopping time regime $\sbf$
in Lemma \ref{lemma2.1}, a pair of NTA domains $\Omega_{\sbf}^\pm$, with ADR boundaries, which
provide a good approximation to $E$, at the scales within $\sbf$, in some appropriate sense.  To be a bit more precise,
$\Omega_{\sbf}:= \Omega_{\sbf}^+\cup \Omega_{\sbf}^-$ will be constructed as a sawtooth region
relative to some family of dyadic cubes, and the nature of this construction will be essential to the dyadic analysis that we will use below.
In this section, we follow essentially verbatim the construction in \cite{HMM2}, which 
we reproduce here for the reader's convenience.

We first discuss some preliminary matters.  We shall utilize the notation and constructions of
\cite{HMM2} (and essentially that of \cite{HM-I} and \cite{HMU}).

Let $\mathcal{W}=\W(\ree\setminus E)$ denote a collection
of (closed) dyadic Whitney cubes of $\ree\setminus E$, so that the cubes in $\mathcal{W}$
form a pairwise non-overlapping covering of $\ree\setminus E$, which satisfy
\begin{equation}\label{Whintey-4I}
4 \diam(I)\leq
\dist(4I,E)\leq \dist(I,E) \leq 40\diam(I)\,,\qquad \forall\, I\in \mathcal{W}\,\end{equation}
(just dyadically divide the standard Whitney cubes, as constructed in  \cite[Chapter VI]{St},
into cubes with side length 1/8 as large)
and also
$$(1/4)\diam(I_1)\leq\diam(I_2)\leq 4\diam(I_1)\,,$$
whenever $I_1$ and $I_2$ touch.


Let $E$ be an $n$-dimensional ADR set and pick two parameters $\eta\ll 1$ and $K\gg 1$. Define
\begin{equation}\label{eq3.1}
\W^0_Q:= \left\{I\in \W:\,\eta^{1/4} \ell(Q)\leq \ell(I)
 \leq K^{1/2}\ell(Q),\ \dist(I,Q)\leq K^{1/2} \ell(Q)\right\}.
 \end{equation}
 
\begin{remark}\label{remark:E-cks} 
We note that $\W^0_Q$ is non-empty,
 provided that we choose $\eta$ small enough, and $K$ large enough, depending only on dimension 
 and the ADR constant of $E$.
  \end{remark}

Assume now that $E$ is UR  and make the corresponding bilateral corona decomposition of Lemma \ref{lemma2.1} with $\eta\ll 1$ and $K\gg 1$.
Given $Q\in \dd(E)$, for this choice of $\eta$ and $K$, we set (as above) $B_Q^*:= B(x_Q, K\ell(Q))$, where we recall that $x_Q$ is the ``center" of $Q$ (see \eqref{cube-ball}-\eqref{cube-ball2}). For a fixed stopping time regime $\sbf$, we choose a co-ordinate system
 so that $\Gamma_{\sbf} =\{(z,\vp_{\sbf}(z)):\, z\in \rn\}$, where $\vp_{\sbf}:\rn\mapsto
 \re$ is a Lipschitz function with
 $\|\vp\|_{\rm Lip} \leq\eta$.

\begin{claim}\label{c3.1} If $Q\in \sbf$, and $I\in \W^0_Q$, then $I$ lies either above
or below $\Gamma_{\sbf}$.  Moreover,
$\dist(I,\Gamma_{\sbf})\geq \eta^{1/2} \ell(Q)$ (and therefore, by \eqref{eq2.2a},  
$\dist(I,\Gamma_{\sbf}) \approx \dist(I,E)$, with  implicit constants that may depend on $\eta$
and $K$).
\end{claim}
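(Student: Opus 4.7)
The plan is to exploit two ingredients that push in opposite directions: on one hand, the Whitney condition \eqref{Whintey-4I} combined with the lower bound $\ell(I)\geq \eta^{1/4}\ell(Q)$ in the definition \eqref{eq3.1} of $\W^0_Q$ gives
\[
\dist(I,E)\;\geq\;4\,\ell(I)\;\geq\;4\eta^{1/4}\ell(Q),
\]
so $I$ is forced to stay far from $E$; on the other hand, the bilateral trapping estimate \eqref{eq2.2a} in Lemma \ref{lemma2.1}(3) says that every point of $\Gamma_{\sbf}\cap B_Q^*$ is within $\eta\,\ell(Q)$ of $E$. Since $\eta\ll \eta^{1/4}$, these two facts are incompatible as soon as a point of $\Gamma_{\sbf}$ comes sufficiently close to $I$.

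To turn this into the quantitative bound $\dist(I,\Gamma_{\sbf})\geq \eta^{1/2}\ell(Q)$, I argue by contradiction. Suppose there exist $z\in I$ and $y\in \Gamma_{\sbf}$ with $|y-z|<\eta^{1/2}\ell(Q)$. First I would verify that $y\in B_Q^*=B(x_Q,K\ell(Q))$: using $\ell(I)\leq K^{1/2}\ell(Q)$ and $\dist(I,Q)\leq K^{1/2}\ell(Q)$ from \eqref{eq3.1}, together with $\diam(Q)\lesssim \ell(Q)$, one obtains $|z-x_Q|\lesssim K^{1/2}\ell(Q)$, hence $|y-x_Q|\lesssim K^{1/2}\ell(Q)\leq K\ell(Q)$ for $K$ chosen large enough (depending only on dimension and the ADR constant). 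Then \eqref{eq2.2a} yields $\dist(y,E)<\eta\ell(Q)$, whence
\[
\dist(z,E)\;\leq\;|z-y|+\dist(y,E)\;<\;\eta^{1/2}\ell(Q)+\eta\ell(Q)\;\leq\;2\eta^{1/2}\ell(Q).
\]
This contradicts $\dist(z,E)\geq \dist(I,E)\geq 4\eta^{1/4}\ell(Q)$, provided $\eta$ is small enough that $2\eta^{1/2}<4\eta^{1/4}$, i.e.\ $\eta<1$ suffices. In particular $I\cap \Gamma_{\sbf}=\emptyset$; since $I$ is connected and $\Gamma_{\sbf}$ is a graph that separates $\re^{n+1}$ into two components, $I$ must lie entirely above or entirely below $\Gamma_{\sbf}$.

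For the parenthetical equivalence $\dist(I,\Gamma_{\sbf})\approx \dist(I,E)$, the lower bound $\dist(I,\Gamma_{\sbf})\gtrsim \dist(I,E)$ follows by projecting the nearest point of $\Gamma_{\sbf}$ onto $E$ via \eqref{eq2.2a} and absorbing the $\eta\ell(Q)$ error into $\dist(I,E)\geq 4\eta^{1/4}\ell(Q)$; the reverse inequality comes from picking a point $y_0\in \Gamma_{\sbf}$ close to $x_Q$ (which exists because $x_Q\in \Delta_Q^*$, so $\dist(x_Q,\Gamma_{\sbf})<\eta\ell(Q)$ by \eqref{eq2.2a}) and noting $\dist(I,\Gamma_{\sbf})\leq |y_0-z|\lesssim K^{1/2}\ell(Q)\approx \dist(I,E)$, with the implicit constants now permitted to depend on $\eta$ and $K$. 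There is no real obstacle here; the only point worth care is the quantitative choice of $K$ that ensures $y\in B_Q^*$ in the contradiction step, and this is built into the freedom we have in choosing the parameters $\eta\ll 1\ll K$.
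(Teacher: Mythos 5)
Your argument is correct and is essentially identical to the paper's proof: both argue by contradiction from the assumption $\dist(I,\Gamma_{\sbf})<\eta^{1/2}\ell(Q)$, use the definition \eqref{eq3.1} of $\W^0_Q$ to place the nearest point $y\in\Gamma_{\sbf}$ inside $B_Q^*$, invoke \eqref{eq2.2a} to get $\dist(y,E)\leq\eta\,\ell(Q)$ and hence $\dist(I,E)\leq 2\eta^{1/2}\ell(Q)$, and contradict the Whitney lower bound $\dist(I,E)\gtrsim\ell(I)\geq\eta^{1/4}\ell(Q)$. You have merely made explicit two points the paper leaves implicit, namely the connectedness/separation argument giving ``above or below'' and the verification of the comparability $\dist(I,\Gamma_{\sbf})\approx\dist(I,E)$.
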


\begin{proof}[Proof of Claim \ref{c3.1}]
Suppose by way of contradiction that $\dist(I,\Gamma_{\sbf}) \leq \eta^{1/2}\ell(Q)$.
Then we may choose $y\in \GS$ such that
$$\dist(I,y)\leq\eta^{1/2}\ell(Q)\,.$$
By construction of $\W^0_Q$, it follows that for all $Z\in I$,
$|Z-y|\lesssim K^{1/2}\ell(Q)$.
Moreover, $|Z-x_Q|\lesssim K^{1/2}\ell(Q)$, and therefore
$|y-x_Q|\lesssim K^{1/2}\ell(Q)$.  In particular, $y\in B_Q^*\cap\GS$,
so by \eqref{eq2.2a}, $\dist(y,E)\leq\eta\,\ell(Q)$.  On the other hand,
choosing $Z_0\in I$  such that $|Z_0-y|=\dist(I,y) \leq\eta^{1/2}\ell(Q)$, we obtain
$\dist(I,E)\leq 2\eta^{1/2}\ell(Q)$.  For $\eta$ small, this contradicts the
Whitney construction, since $\dist(I,E) \approx \ell(I) \geq \eta^{1/4}\ell(Q)$.
\end{proof}

Next, given $Q\in\sbf$,  we augment $\W^0_Q$.  We split $\W^0_Q=\W_Q^{0,+} \cup \W_Q^{0,-}$,
where $I\in\W_Q^{0,+}$ if $I$ lies above
$\GS$, and  $I\in\W_Q^{0,-}$ if $I$ lies below
$\GS$.   Choosing $K$ large and $\eta$ small enough, by \eqref{eq2.2a}, 
we may assume that both $\W_Q^{0,\pm}$ are non-empty.
We focus on $\W_Q^{0,+}$, as the construction for $\W_Q^{0,-}$ is the same.
For each $I\in \W_Q^{0,+}$, let $X_I$ denote the center of $I$.
Fix one particular $I_0\in \W_Q^{0,+}$, with center $X^+_Q:= X_{I_0}$.
Let $\widetilde{Q}$ denote the dyadic parent of $Q$, unless $Q=Q(\sbf)$; in the latter case
we simply set $\Qt=Q$.  Note that
$\widetilde{Q}\in\sbf$,
by the coherency of $\sbf$.
By Claim \ref{c3.1}, for each $I$ in $\W_Q^{0,+}$, or in   $\W_{\widetilde{Q}}^{0,+}$,
we have
$$\dist(I,E)\approx\dist(I,Q)\approx\dist(I,\GS)\,,$$
where the implicit constants may depend on $\eta$ and $K$.  Thus, for each
such $I$, we may fix a Harnack chain, call it
$\mathcal{H}_I$, relative to the Lipschitz domain
$$\Omega_{\GS}^+:=\left\{(x,t) \in \ree: t>\vp_{\sbf}(x)\right\}\,,$$
connecting $X_I$ to $X_Q^+$.  By the bilateral approximation condition
\eqref{eq2.2a}, the definition of $\W^0_Q$, and the fact that $K^{1/2}\ll K$,
we may construct this Harnack Chain so that it consists of a bounded
number of balls (depending on $\eta$ and $K$), and stays a distance at least
$c\eta^{1/2}\ell(Q)$ away from $\GS$ and from $E$.
We let $\W^{*,+}_Q$ denote the set of all $J\in\W$ which meet at least one of the Harnack
chains $\mathcal{H}_I$, with $I\in \W_Q^{0,+}\cup\W_{\widetilde{Q}}^{0,+}$
(or simply $I\in \W_Q^{0,+}$, if $Q=Q(\sbf)$), i.e.,
$$\W^{*,+}_Q:=\left\{J\in\W: \,\exists \, I\in \W_Q^{0,+}\cup\W_{\Qt}^{0,+}
\,\,{\rm for\, which}\,\, \mathcal{H}_I\cap J\neq \emptyset \right\}\,,$$
where as above, $\widetilde{Q}$ is the dyadic parent of $Q$, unless
$Q= Q(\sbf)$, in which case we simply set $\Qt=Q$ (so the union is redundant).
We observe that, in particular, each $I\in \W^{0,+}_Q\cup \W^{0,+}_{\Qt}$
meets $\mathcal{H}_I$, by definition, and therefore
\begin{equation}\label{eqW}
\W_Q^{0,+}\cup \W^{0,+}_{\Qt} \subset \W_Q^{*,+}\,.
\end{equation}
Of course, we may construct $\W^{*,-}_Q$ analogously.
We then set
$$\W^*_Q:=\W^{*,+}_Q\cup \W^{*,-}_Q\,.$$
It follows from the construction of the augmented collections $\W_Q^{*,\pm}$ 
that there are uniform constants $c$ and $C$ such that
\begin{eqnarray}\label{eq2.whitney2}
& c\eta^{1/2} \ell(Q)\leq \ell(I) \leq CK^{1/2}\ell(Q)\,, \quad \forall I\in \mathcal{W}^*_Q,\\\nonumber
&\dist(I,Q)\leq CK^{1/2} \ell(Q)\,,\quad\forall I\in \mathcal{W}^*_Q.
\end{eqnarray}

Observe that $\W_Q^{*,\pm}$ and hence also $\W^*_Q$
have been defined for any $Q$ that belongs to some stopping time regime $\sbf$,
that is, for any $Q$ belonging to the ``good" collection $\G$ of Lemma \ref{lemma2.1}.  On the other hand, we
have defined $\W_Q^0$ for {\it arbitrary} $Q\in \dd(E)$.

We now set
\begin{equation}\label{Wdef}
\W_Q:=\left\{
\begin{array}{l}
\W_Q^*\,,
\,\,Q\in\G,
\\[6pt]
\W_Q^0\,,
\,\,Q\in\B
\end{array}
\right.\,,
\end{equation}
and for $Q \in\G$ we shall henceforth simply write $\W_Q^\pm$ in place of $\W_Q^{*,\pm}$.

Next, we choose a small parameter $\tau_0>0$, so that
for any $I\in \W$, and any $\tau \in (0,\tau_0]$,
the concentric dilate
$I^*(\tau):= (1+\tau) I$ still satisfies the Whitney property
\begin{equation}\label{whitney}
\diam I\approx \diam I^*(\tau) \approx \dist\left(I^*(\tau), E\right) \approx \dist(I,E)\,, \quad 0<\tau\leq \tau_0\,.
\end{equation}
Moreover,
for $\tau\leq\tau_0$ small enough, and for any $I,J\in \W$,
we have that $I^*(\tau)$ meets $J^*(\tau)$ if and only if
$I$ and $J$ have a boundary point in common, and that, if $I\neq J$,
then $I^*(\tau)$ misses $(3/4)J$.
Given an  arbitrary $Q\in\dd(E)$, we may define an associated
Whitney region $U_Q$ (not necessarily connected), as follows:
\begin{equation}\label{eq3.3aa}
U_Q=U_{Q,\tau}:= \bigcup_{I\in \W_Q} I^*(\tau)
\end{equation}
For later use, it is also convenient to introduce some fattened version of $U_Q$:  if $0<\tau\le \tau_0/2$,
\begin{equation}\label{eq3.3aa-fat}
\widehat{U}_Q=U_{Q,2\,\tau}:= \bigcup_{I\in \W_Q} I^*(2\,\tau). 
\end{equation}
If $Q\in\G$, then $U_Q$ splits into exactly two connected components
\begin{equation}\label{eq3.3b}
U_Q^\pm= U_{Q,\tau}^{\pm}:= \bigcup_{I\in \W^{\pm}_Q} I^*(\tau)\,.
\end{equation}
When the particular choice of $\tau\in (0,\tau_0]$ is not important,
for the sake of notational convenience, we may
simply write $I^*$, $U_Q$,  and $U_Q^{\pm}$ in place of
 $I^*(\tau)$, $U_{Q,\tau}$, and $U_{Q,\tau}^{\pm}$.
We note that for $Q\in\G$, each $U_Q^{\pm}$ is Harnack chain connected, by construction
(with constants depending on the implicit parameters $\tau, \eta$ and $K$);
moreover, for a fixed stopping time regime $\sbf$,
if $Q'$ is a child of $Q$, with both $Q',\,Q\in \sbf$, then
$U_{Q'}^{+}\cup U_Q^{+}$
is Harnack Chain connected, and similarly for
$U_{Q'}^{-}\cup U_Q^{-}$.  

We may also define ``Carleson Boxes" relative to any $Q\in\dd(E)$, by
\begin{equation}\label{eq3.3a}
T_Q=T_{Q,\tau}:=\interior\left(\bigcup_{Q'\in\dd_Q} U_{Q,\tau}\right)\,,
\end{equation}
where
\begin{equation}\label{eq3.4a}
\dd_Q:=\left\{Q'\in\dd(E):Q'\subset Q\right\}\,.
\end{equation}
Let us note that we may choose $K$ large enough so that, for every $Q$,
\begin{equation}\label{eq3.3aab}
T_Q \subset B_Q^*:= B\left(x_Q,K\ell(Q)\right)\,. 
\end{equation}
For future reference, we also introduce dyadic sawtooth regions as follows.
Given a family $\mathcal{F}$ of disjoint cubes $\{Q_j\}\subset \mathbb{D}$, we define
the {\bf global discretized sawtooth} relative to $\F$ by
\begin{equation}\label{eq2.discretesawtooth1}
\dd_{\F}:=\dd\setminus \bigcup_{\F} \dd_{Q_j}\,,
\end{equation}
i.e., $\dd_{\F}$ is the collection of all $Q\in\dd$ that are not contained in any $Q_j\in\F$.
Given some fixed cube $Q$,
the {\bf local discretized sawtooth} relative to $\F$ by
\begin{equation}\label{eq2.discretesawtooth2}
\dd_{\F,Q}:=\dd_Q\setminus \bigcup_{\F} \dd_{Q_j}=\dd_\F\cap\dd_Q.
\end{equation}
Note that in this way $\dd_Q=\dd_{\textup{\O},Q}$.

Similarly, we may define geometric sawtooth regions as follows.
Given a family $\mathcal{F}$ of disjoint cubes $\{Q_j\}\subset \mathbb{D}$,
we define the {\bf global sawtooth} and the {\bf local sawtooth} relative to $\mathcal{F}$ by respectively
\begin{equation}\label{eq2.sawtooth1}
\Omega_{\mathcal{F}}:= {\rm int } \bigg( \bigcup_{Q'\in\dd_\F} U_{Q'}\bigg)\,,
\qquad
\Omega_{\mathcal{F},Q}:=  {\rm int } \bigg( \bigcup_{Q'\in\dd_{\F,Q}} U_{Q'}\bigg)\,.
\end{equation}
Notice that $\Omega_{\textup{\O},Q}=T_Q $.
For the sake of notational convenience, given a pairwise disjoint family $\F\in \dd$, and a cube $Q\in \dd_{\F}$, we set
\begin{equation}\label{Def-WF}
\W_{\F}:=\bigcup_{Q'\in\dd_{\F}}\W_{Q'}\,,\qquad
\W_{\F,Q}:=\bigcup_{Q'\in\dd_{\F,Q}}\W_{Q'}\,,\end{equation}
so that in particular, we may write
\begin{equation}\label{eq3.saw}
\Omega_{\mathcal{F},Q}={\rm int }\,\bigg(\bigcup_{I\in\,\W_{\F,Q}} I^*\bigg)\,.
\end{equation}


\smallskip

\begin{remark}\label{r3.11a}
We recall that, by construction
(cf. \eqref{eqW}, \eqref{Wdef}), $\W_{\Qt}^{0,\pm}\subset \W_Q$, and therefore
$Y_Q^\pm \in U_Q^\pm\cap U^\pm_{\Qt}$.  Moreover, since $Y_Q^\pm$ is the center of some $I\in \W_{\Qt}^{0,\pm}$,
we have that $\dist(Y_Q^\pm, \partial U_Q^\pm)\approx \dist(Y_Q^\pm, \partial U_{\Qt}^\pm) \approx \ell(Q)$
(with implicit constants possibly depending on
$\eta$ and/or $K$)
\end{remark}

\smallskip

\begin{remark}\label{r3.11}
Given a stopping time regime $\sbf$ as in Lemma \ref{lemma2.1}, for any semi-coherent
subregime (cf. Definition \ref{d3.11}) $\sbf'\subset \sbf$ (including, of course, $\sbf$ itself), we now set
\begin{equation}\label{eq3.2}
\Omega_{\sbf'}^\pm = {\rm int}\left(\bigcup_{Q\in\sbf'} U_Q^{\pm}\right)\,,
\end{equation}
and let $\Omega_{\sbf'}:= \Omega_{\sbf'}^+\cup\Omega^-_{\sbf'}$.
Note that implicitly, $\Omega_{\sbf'}$ depends upon $\tau$ (since $U_Q^\pm$ has such dependence).
When it is necessary to consider the value of $\tau$
explicitly, we shall write $\Omega_{\sbf'}(\tau)$.
\end{remark}

\smallskip
It is helpful to introduce some terminology now whose utility will become clear later. Let $Q \in \dd$ define the following

\begin{equation}\label{boundary whitney cubes} 
\mathcal{I}(Q) := \{ I \in \W: I \cap T_Q \neq \emptyset \} 
\end{equation}
and  also
\begin{equation}\label{buffer zone}
V(Q) = \interior\left( \bigcup_{I\in\mathcal{I}(Q)}I^*\right).
\end{equation}
We note that, trivially, $T_Q\subset V(Q)$. 
Notice also that if $\interior(I^\ast) \subset V(Q)$ then 
\begin{equation}
\label{property 1 of BQ} \dist(I^\ast, Q) \lesssim \ell(Q)
\end{equation}
and 
\begin{equation}
\label{property 2 of BQ} \ell(I) \lesssim \ell(Q).
\end{equation}

\medskip 
\begin{lemma} \label{separation of buffer zones}  Let $Q_1, Q_2 \in \dd(E)$ if $U_{Q_1}$ meets $U_{Q_2}$ then
\begin{equation} \label{Lemma 1 result 1} \dist(Q_1,Q_2) \lesssim \min\{\ell(Q_1),\ell(Q_2)\} \end{equation}
with implicit constant depending only on $K, \eta, $ and dimension.
Moreover there exists a constant $\Upsilon$ depending only on $K, \eta,$ and dimension such that if 
$$ \dist(Q_1,Q_2) > \Upsilon \max\{\ell(Q_1), \ell(Q_2)\}$$
then 
\begin{equation} \label{Lemma 1 result 2} \overline{V(Q_1)} \cap \overline{V(Q_2)} = \emptyset .\end{equation}
\end{lemma}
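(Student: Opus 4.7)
The plan is to derive both assertions as direct consequences of the explicit size and distance constraints embedded in the definitions of $\W_Q$, $U_Q$, and $V(Q)$, together with the Whitney property that touching Whitney cubes have comparable sidelengths. The heuristic: if two Whitney regions $U_{Q_i}$ (resp.\ two closed fattened boxes $\overline{V(Q_i)}$) meet, then some generating Whitney cubes must touch, and via \eqref{eq3.1}, \eqref{eq2.whitney2}, \eqref{property 1 of BQ}, and \eqref{property 2 of BQ} this transfers to a comparability and closeness statement on the dyadic cubes $Q_i\subset E$.

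For the first assertion, suppose $U_{Q_1}\cap U_{Q_2}\neq\emptyset$. By \eqref{eq3.3aa} we may choose $I_j\in\W_{Q_j}$ with $I_1^\ast(\tau)\cap I_2^\ast(\tau)\neq\emptyset$. Since $\tau\le\tau_0$, the discussion around \eqref{whitney} forces $I_1$ and $I_2$ to share a common boundary point, and hence $\ell(I_1)\approx\ell(I_2)$ by the Whitney property \eqref{Whintey-4I}. Combining with the bounds $\eta^{1/4}\ell(Q_j)\lesssim\ell(I_j)\lesssim K^{1/2}\ell(Q_j)$---valid whether $Q_j\in\G$ or $Q_j\in\B$, from \eqref{eq3.1} and \eqref{eq2.whitney2}---yields $\ell(Q_1)\approx\ell(Q_2)$ with constants depending on $\eta$, $K$ and dimension. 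The triangle inequality, together with $\dist(I_j,Q_j)\lesssim K^{1/2}\ell(Q_j)$ and $\diam(I_j^\ast)\lesssim K^{1/2}\ell(Q_j)$, then produces $\dist(Q_1,Q_2)\lesssim_{K,\eta} \min\{\ell(Q_1),\ell(Q_2)\}$.

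For the second assertion I would argue the contrapositive: assume $\overline{V(Q_1)}\cap \overline{V(Q_2)}\neq\emptyset$ and show $\dist(Q_1,Q_2)\lesssim_{K,\eta}\max\{\ell(Q_1),\ell(Q_2)\}$. The family $\{I^\ast:I\in\mathcal{I}(Q_j)\}$ is locally finite (each point of $\ree$ is covered by only a bounded number of Whitney cubes), so its union is closed and $\overline{V(Q_j)}\subset\bigcup_{I\in\mathcal{I}(Q_j)}I^\ast$. Hence we may select $I_j\in\mathcal{I}(Q_j)$ with $I_1^\ast\cap I_2^\ast\neq\emptyset$. Invoking \eqref{property 1 of BQ} and \eqref{property 2 of BQ}, each $I_j^\ast$ satisfies $\dist(I_j^\ast,Q_j)\lesssim\ell(Q_j)$ and $\diam(I_j^\ast)\lesssim\ell(Q_j)$, and the triangle inequality yields
$$\dist(Q_1,Q_2)\le\dist(Q_1,I_1^\ast)+\diam(I_1^\ast)+\diam(I_2^\ast)+\dist(I_2^\ast,Q_2)\lesssim\max\{\ell(Q_1),\ell(Q_2)\},$$
with implicit constant depending only on $K$, $\eta$, and dimension. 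Taking $\Upsilon$ strictly larger than this implicit constant furnishes the contrapositive conclusion.

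The main ``obstacle'' is purely bookkeeping: tracking how the constants depend on $\eta$, $K$, and dimension as they propagate through the Whitney property and the estimates \eqref{eq3.1}--\eqref{eq2.whitney2}, \eqref{property 1 of BQ}--\eqref{property 2 of BQ}. A minor secondary point is the justification $\overline{V(Q)}\subset\bigcup_{I\in\mathcal{I}(Q)}I^\ast$, for which local finiteness of the Whitney decomposition suffices.
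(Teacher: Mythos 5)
Your proof follows essentially the same route as the paper's: touching fattened Whitney cubes have comparable sidelengths, which via the size/distance constraints in the definitions of $\W_Q$ and $\mathcal{I}(Q)$ transfers to $\ell(Q_1)\approx\ell(Q_2)$ and the distance bounds, with $\Upsilon$ chosen larger than the resulting implicit constant. One small caveat on your second part: the inclusion $\overline{V(Q)}\subset\bigcup_{I\in\mathcal{I}(Q)}I^\ast$ is not justified by local finiteness, since Whitney cubes accumulate at $E$ and $\overline{V(Q)}$ contains points of $E$ (e.g.\ $Q$ itself) lying in no $I^\ast$; but the estimate survives, since any $x\in\overline{V(Q_1)}\cap\overline{V(Q_2)}$ is a limit of points $x_k\in I_k^\ast$ with $I_k\in\mathcal{I}(Q_j)$, whence $\dist(x,Q_j)\lesssim\ell(Q_j)$ for $j=1,2$ and the triangle inequality gives $\dist(Q_1,Q_2)\lesssim\max\{\ell(Q_1),\ell(Q_2)\}$ as desired.
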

\begin{proof} Suppose that $U_{Q_1}$ meets $U_{Q_2}$ then we have that there exists a cube $I_1^\ast \in U_{Q_1}$ and $I_2^\ast \in U_{Q_2}$ such that $I_1^\ast \cap I_2^\ast \neq \emptyset$. Since $I_1^\ast$ and $I_2^\ast$ are Whitney cubes that meet we have that
\begin{equation} 
\label{Lemma 1 equation 1} \ell(I_1) \approx \ell(I_2).
\end{equation}
Then by construction of $U_{Q_1}$ and $U_{Q_2^\prime}$ we have
\begin{equation} 
\label{Lemma 1 equation 2} \ell(Q_1) \approx \ell(Q_2),
\end{equation}
\begin{equation} 
\label{Lemma 1 equation 3} \dist(Q_1, I_1^\ast) \lesssim \ell(Q_1), 
\end{equation}
\begin{equation} 
\label{Lemma 1 equation 4} \dist(Q_2, I_2^\ast) \lesssim \ell(Q_2^\ast). 
\end{equation}
So that \ref{Lemma 1 equation 1}, \ref{Lemma 1 equation 2}, \ref{Lemma 1 equation 3}, and \ref{Lemma 1 equation 4} yield 
$$\dist(Q_1,Q_2) \lesssim \ell(Q_1).$$
To prove \ref{Lemma 1 result 2} we need only see that by \ref{property 1 of BQ} and 
\ref{property 2 of BQ} if $\overline{V(Q_1)}$ meets $\overline{V(Q_2)}$
that 
\begin{equation}\label{Lemma 1 equation 5}
\dist(Q_1,Q_2) \lesssim \max\{\ell(Q_1), \ell(Q_2)\},
\end{equation}
in addition we can even put a distance between $V(Q_1)$ and $V(Q_2)$ on the order of $ \max\{\ell(Q_1), \ell(Q_2)\}$ by making $\Upsilon$ larger.
\end{proof}

\section{Carleson measures: proof of the Theorem \ref{t1}}
The proof will utilize the method of  ``extrapolation of Carleson measures". 
This method was first used by J. L. Lewis \cite{LM}, whose work was influenced by the 
Corona construction of Carleson \cite{Car} and the 
work of Carleson and Garnett \cite{CG} 
(see also \cite{HL}, \cite{AHLT}, \cite{AHMTT}, \cite{HM-TAMS}, \cite{HM-I}.) 
We will apply this method to the (discrete) packing measure from the bilateral Corona decomposition.
Let $E\subset \ree$ be a UR set of co-dimension 1.    We fix  positive numbers $\eta\ll 1$,
and $K\gg 1$, and for these values of $\eta$ and $K$,
we perform the bilateral Corona decomposition of $\dd(E)$ guaranteed by Lemma \ref{lemma2.1}.
Let $\M:=\{Q(\sbf)\}_{\sbf}$ denotes the collection of cubes which are the maximal elements of the stopping time regimes in $\G$.
Given a cube $Q\in \dd(E)$, we set
\begin{equation}\label{eq4.0}
\alpha_Q:= 
\begin{cases} \sigma(Q)\,,&{\rm if}\,\, Q\in \M\cup\B, \\
0\,,& {\rm otherwise}.\end{cases}
\end{equation}
Given  any collection $\dd'\subset\dd(E)$, we define
\begin{equation}\label{eq4.1}
\mut(\dd'):= \sum_{Q\in\dd'}\alpha_{Q}.
\end{equation}
We recall that $\dd_Q$ is the ``discrete Carleson region
relative to $Q$",
defined in \eqref{eq3.4a}. 
Then by Lemma \ref{lemma2.1} (2), we have the discrete Carleson measure estimate
\begin{multline}\label{eq4.3x}
\mut(\dd_Q):=\sum_{Q'\subset Q, \,Q'\in\B} \sigma(Q')
\,\,+\,\sum_{\sbf: Q(\sbf)\subset Q}\sigma\big(Q(\sbf)\big)\,\leq\, C_{\eta,K}\, \sigma(Q)\,,
\\[4pt] \forall Q\in \dd(E)\,.
\end{multline}
Given a family $\F:=\{Q_j\}\subset \dd(E)$ of
pairwise disjoint cubes, we recall that the ``discrete  sawtooth" $\dd_\F$ is 
the collection of all cubes in $\dd(E)$ that are not contained in any $Q_j\in\F$ (cf. \eqref{eq2.discretesawtooth1}),
and we define the ``restriction of $\mut$ to the sawtooth $\dd_\F$'' by
\begin{equation}\label{eq4.4x}
\mut_\F(\dd'):=\mut(\dd'\cap\dd_\F)= \sum_{Q\in\dd'\setminus (\cup_{\F} \,\dd_{Q_j})}\alpha_{Q}.
\end{equation}
We take the usual definition of Carleson norm
$$\|\mut\|_{\C}:= \sup_{Q\in\dd(E)}\frac{\mut(\dd_Q)}{\sigma(Q)}.$$
Notice that the way that we have defined $\mut$ we have 
\begin{equation} \|\mut\|_{\C} \le C_{\eta,K} \end{equation}
The following Lemma will be one of two crucial lemmas in proving Theorem \ref{t1}.
\begin{lemma}\label{lemma:Corona}{\cite[Lemma 7.2]{HM-I}}
Suppose that $E$ is ADR.  Fix $Q\in \dd(E)$
and $\mut$ as above.  Let $a\geq 0$ and $b>0$, and suppose that
$\mut(\dd_{Q})\leq (a+b)\,\sigma(Q).$
Then there is a family $\F=\{Q_j\}\subset\dd_{Q}$
of pairwise disjoint cubes, and a constant $C$ depending only on dimension
and the ADR constant such that
\begin{equation} \label{Corona-sawtooth}
\|\mut_\F\|_{\C(Q)}
\leq C b,
\end{equation}
\begin{equation}
\label{Corona-bad-cubes}
\sigma(B)
\leq \frac{a+b}{a+2b}\, \sigma(Q)\,,
\end{equation}
where $B$ is the union of those $Q_j\in\F$ such that
$\mut\big(\dd_{Q_j}\setminus \{Q_j\}\big)>a\,\sigma(Q_j)$.
\end{lemma}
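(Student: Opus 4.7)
The plan is to prove the lemma by a single stopping-time selection of cubes, in the style of the classical Corona construction.

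Define $\mathcal{F}=\{Q_j\}$ to be the collection of maximal subcubes of $Q$, with $Q_j\subsetneq Q$, satisfying
\[
\mut(\dd_{Q_j})>(a+2b)\,\sigma(Q_j).
\]
Note that $Q$ itself is excluded from $\mathcal{F}$: by hypothesis $\mut(\dd_Q)\leq(a+b)\sigma(Q)<(a+2b)\sigma(Q)$. The selected cubes are pairwise disjoint by maximality within the dyadic structure.

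Property \eqref{Corona-bad-cubes} is the easy direction and comes for free from disjointness. Since the $\{\dd_{Q_j}\}_{Q_j\in\mathcal{F}}$ are disjoint subcollections of $\dd_Q$, summing the stopping inequality yields
\[
(a+2b)\sum_{Q_j\in\mathcal{F}}\sigma(Q_j)\;<\;\sum_{Q_j\in\mathcal{F}}\mut(\dd_{Q_j})\;\leq\;\mut(\dd_Q)\;\leq\;(a+b)\sigma(Q),
\]
so $\sum_{Q_j}\sigma(Q_j)\leq\frac{a+b}{a+2b}\sigma(Q)$. Since $B\subseteq\bigcup_\mathcal{F}Q_j$, \eqref{Corona-bad-cubes} follows immediately.

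For \eqref{Corona-sawtooth}, I would argue as follows. Fix any $Q'\subseteq Q$. If $Q'\subseteq Q_j$ for some $Q_j\in\mathcal{F}$, then $\dd_{Q'}\subseteq\dd_{Q_j}$, and by the definition of $\mut_\mathcal{F}$ every cube of $\dd_{Q'}$ is excluded from the sawtooth, so $\mut_\mathcal{F}(\dd_{Q'})=0$. Otherwise $Q'\in\dd_{\mathcal{F},Q}$; in this case every cube $Q_j\in\mathcal{F}$ with $Q_j\cap Q'\neq\emptyset$ must satisfy $Q_j\subsetneq Q'$ (by nesting of dyadic cubes together with $Q'\not\subseteq Q_j$), and the maximality of $\mathcal{F}$ together with the bound $\mut(\dd_Q)\leq(a+b)\sigma(Q)$ forces $\mut(\dd_{Q'})\leq(a+2b)\sigma(Q')$. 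One then writes
\[
\mut_\mathcal{F}(\dd_{Q'})\;=\;\mut(\dd_{Q'})\;-\sum_{\substack{Q_j\in\mathcal{F}\\ Q_j\subsetneq Q'}}\mut(\dd_{Q_j}),
\]
and the task is to exploit the stopping inequality $\mut(\dd_{Q_j})>(a+2b)\sigma(Q_j)$ on each selected $Q_j$ in order to absorb the $a$-part of the mass, leaving only the $b$-excess. This is the main obstacle: producing the sharper $Cb\sigma(Q')$ bound rather than the naive $(a+2b)\sigma(Q')$ bound that follows directly from maximality. I expect this to require inspecting the cubes of $\dd_{\mathcal{F},Q}$ generation by generation and comparing the density $\mut(\dd_{Q''})/\sigma(Q'')$ of each such cube to the density of its parent (which by maximality sits at or below the $(a+2b)$ threshold), and then summing the ``jumps'' telescopically against the ADR measure.

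In summary, the easy ingredient is \eqref{Corona-bad-cubes}, a direct consequence of disjointness and the stopping inequality, while the technical heart of the proof lies in verifying the Carleson-norm bound \eqref{Corona-sawtooth}; I would expect the argument there to proceed by combining the single-cube stopping inequality with a careful telescoping of densities across dyadic scales inside $\dd_{\mathcal{F},Q}$.
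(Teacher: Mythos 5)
Your argument for \eqref{Corona-bad-cubes} is the standard one and is fine for the family you chose, but that family does not satisfy \eqref{Corona-sawtooth}, and the ``main obstacle'' you flag at the end is not a technical step to be filled in later --- with your stopping time the Carleson bound is simply false. Concretely, take $a=N\gg 1$ and $b=1$, and suppose $\alpha_{Q''}=\sigma(Q'')$ for every $Q''\subseteq Q$ within $N$ dyadic generations of $Q$ and $\alpha_{Q''}=0$ otherwise (consistent with \eqref{eq4.0}, and with $\mut(\dd_Q)=(N+1)\sigma(Q)=(a+b)\sigma(Q)$). Every proper subcube $Q''$ then has $\mut(\dd_{Q''})\le a\,\sigma(Q'')<(a+2b)\sigma(Q'')$, so your selection produces $\F=\emptyset$ and $\|\mut_\F\|_{\C(Q)}=\|\mut\|_{\C(Q)}\ge a+b$, which cannot be $\le Cb$ with $C$ independent of $a$ --- and the induction in Section 4 is run precisely in the regime $a\gg b$. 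The defect is structural: testing the density of a whole Carleson region against the threshold $a+2b$ detects only cubes whose subtree is overloaded, whereas \eqref{Corona-sawtooth} requires $\F$ to absorb essentially \emph{all} of the ``$a$ units'' of mass, wherever along a branch they accumulate; no generation-by-generation telescoping can recover an inequality that fails outright for $\F=\emptyset$.

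The correct stopping time (this is the content of \cite[Lemma 7.2]{HM-I}, which the paper cites rather than reproves) is run on the accumulated density along a branch: for $Q''\in\dd_Q$ set $A(Q''):=\sum_{P:\,Q''\subseteq P\subseteq Q}\alpha_P/\sigma(P)$, and let $\F$ be the maximal cubes $Q_j\in\dd_Q$ with $A(Q_j)>2b$ (note that $Q$ itself may be selected; the application in the paper explicitly allows $\F=\{Q_0\}$). For \eqref{Corona-sawtooth}: every $Q''\in\dd_{\F,Q'}$ has $A(Q'')\le 2b$, and writing $\alpha_{Q''}=\int_{Q''}\bigl(\alpha_{Q''}/\sigma(Q'')\bigr)\,d\sigma$ and summing, for each $x\in Q'$, along the chain of cubes containing $x$ (on which $A$ is nondecreasing, so the admissible cubes form an initial segment and their densities telescope to at most $A$ of the smallest one), one gets $\mut_\F(\dd_{Q'})\le 2b\,\sigma(Q')$. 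For \eqref{Corona-bad-cubes}: each $Q_j\in\F$ with $\mut(\dd_{Q_j}\setminus\{Q_j\})>a\,\sigma(Q_j)$ contributes to $\mut(\dd_Q)$, disjointly in $j$, both the mass $>a\,\sigma(Q_j)$ strictly below it and the weighted mass $\sigma(Q_j)A(Q_j)>2b\,\sigma(Q_j)$ from the cubes above it; summing and using pairwise disjointness of $\F$ (so that the weights $\sigma(Q_j)/\sigma(P)$ attached to a fixed $P$ add up to at most $1$) gives $(a+2b)\,\sigma(B)<\mut(\dd_Q)\le(a+b)\,\sigma(Q)$. So your proof of \eqref{Corona-bad-cubes} survives in spirit, but the family must be redefined before \eqref{Corona-sawtooth} can be proved.
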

The other crucial lemma is the following.
\begin{lemma}\label{subregime sawtooths are NTA}\cite[Lemma 3.24]{HMM2} : \label{lemma3.15} Let $\sbf$ be a given
stopping time regime as in Lemma \ref{lemma2.1},
and let $\sbf'$ be any nonempty, semi-coherent  subregime of $\sbf$.
Then for $0<\tau\leq\tau_0$, with $\tau_0$ small enough,
each of $\Omega^\pm_{\sbf'}$ is an NTA domain, with ADR boundary.
The constants in the NTA and ADR conditions depend only on $n,\tau,\eta, K$, and the
ADR/UR constants for $E$.
\end{lemma}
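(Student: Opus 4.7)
The plan is to verify the three NTA properties (interior corkscrew, exterior corkscrew, Harnack chain) and the ADR property for $\partial \Omega^\pm_{\sbf'}$ directly from the Whitney/bilateral-corona construction, exploiting the fact that by \eqref{eq2.2a} every cube $Q \in \sbf$ is sandwiched between $E$ and the Lipschitz graph $\GS$, and that by Claim \ref{c3.1} the Whitney cubes in $\W^{*,\pm}_Q$ live at distance $\gtrsim \eta^{1/2}\ell(Q)$ from $\GS$ on the corresponding side. Geometrically, $\Omega^+_{\sbf'}$ behaves like a sawtooth sitting inside the Lipschitz domain $\{t > \varphi_{\sbf}(x)\}$, so one expects the NTA/ADR bounds to be inherited from the Lipschitz graph up to errors controlled by $\eta$ and $K$.

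For the interior corkscrew at a surface point $P \in \partial \Omega^+_{\sbf'}$ and scale $r$, I would pick a cube $Q \in \sbf'$ with $\ell(Q) \approx r$ whose Whitney region $U_Q^+$ sits near $P$; the Whitney property \eqref{Whintey-4I} combined with the bounds \eqref{eq2.whitney2} guarantees that any $I \in \W_Q^+$ contains a concentric ball of radius $\approx \ell(I) \approx \ell(Q)$ lying inside $U_Q^+ \subset \Omega^+_{\sbf'}$, yielding a corkscrew. For the exterior corkscrew I would use the Lipschitz-graph separation: the region below $\GS$ is disjoint from $\Omega^+_{\sbf'}$, and since $U_Q^\pm$ are separated from $\GS$ by distance $\gtrsim \eta^{1/2}\ell(Q)$, one can drop a ball of radius $\approx r$ strictly below $\GS$ near $P$; one must verify that such a ball misses $\overline{\Omega^+_{\sbf'}}$, using \eqref{eq2.2a} and the coherency of $\sbf'$ to rule out components of $\Omega^+_{\sbf'}$ creeping onto the wrong side of $\GS$ at the relevant scale.

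For the Harnack chain condition the workhorse is coherency (Definition \ref{d3.11}(b)): given $X \in U_Q^+$ and $X' \in U_{Q'}^+$ with $Q, Q' \in \sbf'$, I would move each point up the dyadic tree within $\sbf'$ to a common ancestor, using Remark \ref{r3.11a} together with the fact that for a parent/child pair $\widetilde{Q}, Q$ in $\sbf$ the regions $U_Q^+ \cup U_{\widetilde{Q}}^+$ are Harnack-chain connected, because $\W^{0,+}_{\widetilde{Q}} \subset \W^{*,+}_Q$ by \eqref{eqW} forces the centers of adjacent-generation Whitney blocks to be linked by a bounded Harnack chain inside the Lipschitz domain $\{t > \varphi_{\sbf}(x)\}$. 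Concatenating a bounded number of such chains produces a full Harnack chain in $\Omega^+_{\sbf'}$ of length depending on $\log(|X-X'|/\min(\delta(X),\delta(X')))$, as required.

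ADR for $\partial \Omega^\pm_{\sbf'}$ would follow by counting the Whitney faces that contribute to the boundary at each scale: each boundary face comes from some $I \in \W_Q$ with $Q \in \sbf'$ and $\ell(I) \approx \ell(Q)$, so the upper and lower Ahlfors bounds for $\partial \Omega^\pm_{\sbf'}$ reduce to the ADR property of $E$ together with bounded overlap of the families $\{\W_Q\}_{Q \in \sbf'}$ at comparable scales, the latter being a consequence of \eqref{eq2.whitney2} and Lemma \ref{separation of buffer zones}. The main obstacle I anticipate is the exterior corkscrew verification and the ADR lower bound near the \emph{lateral} boundary of the sawtooth, where the subregime $\sbf'$ terminates (either at minimal cubes of $\sbf'$ or at the boundary of $Q(\sbf')$): one must ensure that the transition does not pinch $\Omega^\pm_{\sbf'}$ into thin channels nor erase boundary pieces. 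This is precisely where the Lipschitz buffer encoded in \eqref{eq2.2a} and Claim \ref{c3.1}, together with the augmentation of $\W^0_Q$ to $\W^{*,\pm}_Q$, does the heavy lifting by providing a uniform ``thickness'' on both sides of $\GS$ throughout the subregime.
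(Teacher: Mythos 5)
First, a point of comparison: the paper does not prove this lemma at all --- it is imported verbatim from \cite{HMM2}, Lemma 3.24 --- so your sketch is really being measured against the argument in that reference, which does follow the broad lines you describe (corkscrews from the Whitney structure, Harnack chains via coherency and the augmented collections $\W_Q^{*,\pm}$, ADR for the sawtooth boundary). Two of your steps, however, contain genuine gaps. For the exterior corkscrew you rely exclusively on dropping a ball below $\GS$. That works only at scales $r\gtrsim \delta(P)$. But $\partial\Omega^+_{\sbf'}$ contains points $P$ lying on faces of fattened Whitney cubes $I^*$ with $\delta(P)\approx\ell(I)$, and the NTA definition requires corkscrews at \emph{every} scale $r$; when $r\ll\delta(P)$ the ball $B(P,r)$ comes nowhere near $\GS$ or $E$, so the graph provides nothing. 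One must show separately that at such a point a fixed proportion of $B(P,r)$ lies outside the locally finite union of dilated cubes $I^*(\tau)$; this is exactly where the smallness of $\tau_0$ (so that $I^*$ meets $J^*$ only when $I$ and $J$ abut, and $I^*$ misses $(3/4)J$ otherwise) is used, and it is the same case split ($r<M\delta(x)$ versus $r\ge M\delta(x)$) that the present paper invokes in Case 1 of the proof of Theorem \ref{t1}. Your interior corkscrew recipe (``pick $Q\in\sbf'$ with $\ell(Q)\approx r$ near $P$'') has the same small-scale defect, since for $r\ll\delta(P)$ no such $Q$ has its Whitney region near $P$; there the corkscrew must come from the single cube $I^*$ on whose face $P$ sits.

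Second, the lower ADR bound does not follow from ``bounded overlap'' of the families $\W_Q$: bounded overlap and counting of contributing Whitney faces yield only the \emph{upper} bound. A union of Whitney faces could a priori carry very little $H^n$ measure in a given ball even with controlled counting. The standard route --- and the one this very paper uses for $\partial\oT$ at the end of the proof of Theorem \ref{t1} --- is to first establish the upper ADR bound and the two-sided corkscrew condition, and then deduce the lower bound from the relative isoperimetric inequality. Without that (or an explicit substitute, e.g.\ projecting onto $\GS$ and using the Lipschitz graph's ADR property), the lower bound is unproven. The Harnack chain portion of your sketch is sound and matches the design of the construction: the augmentation from $\W_Q^{0,\pm}$ to $\W_Q^{*,\pm}$ exists precisely so that $U_Q^\pm\cup U_{\Qt}^\pm$ is chain connected, and coherency lets you climb to a common ancestor in a number of steps controlled by $\log\bigl(|X-X'|/\min(\delta(X),\delta(X'))\bigr)$.
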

The following standard covering type lemma will be required.
\begin{lemma}\label{covering lemma} Fix $Q_0 \in \dd(E)$ and let $\F = \{Q_j\} \subset \dd_{Q_0}$ be any pairwise disjoint family of cubes.
Then for any positive constant $\kappa$ we may find a sub-collection $\G = \{\Qt_i\} \subset \F$ with the following properties:
\begin{equation}\label{covering property 1}
\sigma(\cup_{\mathcal{G}} \widetilde{Q}_i) \ge C \sigma(\cup_\mathcal{F} Q_j) 
\end{equation}
\begin{equation}
\label{covering property 2} \dist(\widetilde{Q}_i, \widetilde{Q}_k) \ge \kappa \max \{\ell(\widetilde{Q}_i), \ell(\widetilde{Q}_k))
\end{equation}
where $C$ depends on $\kappa$, dimension and ADR.
\end{lemma}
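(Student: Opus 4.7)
The plan is to reduce to a standard Vitali covering argument applied to enlarged concentric balls. Associate to each $Q_j \in \F$ the Euclidean ball
\[
B_j \,:=\, B\bigl(x_{Q_j},\,\Lambda\,\ell(Q_j)\bigr)\,,
\]
where $x_{Q_j}$ is the ``center'' of $Q_j$ as in \eqref{cube-ball2}, and $\Lambda = \Lambda(\kappa) \gg 1$ is a large parameter to be fixed below. Because $\F \subset \dd_{Q_0}$, the radii of the $B_j$ are uniformly bounded by $\Lambda\,\ell(Q_0)$, so the classical Vitali covering lemma yields a countable pairwise disjoint subfamily $\{B_{j_i}\}_i$ satisfying
\[
\bigcup_{Q_j\in\F} B_j \,\subset\, \bigcup_{i} 5\,B_{j_i}\,.
\]
I then take $\G := \{\Qt_i\} := \{Q_{j_i}\} \subset \F$.

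To verify the separation condition \eqref{covering property 2}, disjointness of $B_{j_i}$ and $B_{j_k}$ gives
\[
|x_{\Qt_i}-x_{\Qt_k}| \,\ge\, \Lambda\bigl(\ell(\Qt_i)+\ell(\Qt_k)\bigr) \,\ge\, \Lambda\,\max\{\ell(\Qt_i),\ell(\Qt_k)\}\,.
\]
Since $\diam(\Qt_i) \le C_1\,\ell(\Qt_i)$ (and similarly for $\Qt_k$) by property $(iv)$ of Lemma \ref{lemmaCh}, subtracting the diameters yields
\[
\dist(\Qt_i,\Qt_k) \,\ge\, (\Lambda - 2 C_1)\,\max\{\ell(\Qt_i),\ell(\Qt_k)\}\,,
\]
so fixing $\Lambda := \kappa + 2 C_1$ delivers \eqref{covering property 2}.

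For the measure bound \eqref{covering property 1}, observe that $Q_j \subset B(x_{Q_j}, C_1\,\ell(Q_j)) \subset B_j$ once $\Lambda \ge C_1$, so Vitali's conclusion puts each $Q_j$ inside some $5B_{j_i} = B(x_{\Qt_i}, 5\Lambda\,\ell(\Qt_i))$. Pairwise disjointness of the $Q_j$ combined with ADR then gives, for each fixed $i$,
\[
\sum_{Q_j\subset 5B_{j_i}} \sigma(Q_j) \,\le\, \sigma\bigl(5B_{j_i}\cap E\bigr) \,\lesssim\, (5\Lambda)^n\,\ell(\Qt_i)^n \,\approx\, (5\Lambda)^n\,\sigma(\Qt_i)\,,
\]
and summing in $i$ (using that the $\Qt_i$ are themselves pairwise disjoint) produces \eqref{covering property 1} with a constant depending only on $\kappa$, dimension, and the ADR constant of $E$. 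There is no essential obstacle: the argument is the standard Vitali/greedy procedure, and the only bookkeeping is to choose the enlargement factor $\Lambda$ large enough to absorb the mismatch between $\ell(\Qt_i)$ and $\diam(\Qt_i)$ built into the dyadic construction.
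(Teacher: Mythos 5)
Your proof is correct. The paper states this as a ``standard covering type lemma'' and omits the proof entirely; your Vitali $5r$-covering argument on the dilated balls $B(x_{Q_j},\Lambda\,\ell(Q_j))$, with $\Lambda=\kappa+2C_1$ chosen to absorb the cube diameters, is exactly the intended standard argument, and both the separation estimate and the ADR-based measure comparison are carried out correctly.
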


\begin{proof}[Proof of Theorem \ref{t1}]
First we fix $\eta$ and $K$ so that that Lemma \ref{subregime sawtooths are NTA} holds. The proof will follow by induction. For any $a \ge 0$ we have the induction hypothesis $H(a)$, defined in the following way. \\ \\
\noindent
\textbf{$H(a):$} There exists $\eta_a > 0$ such that for all $Q_0 \in \dd(E)$ satisfying $\mut(\dd_{Q_0}) \le a \sigma(Q)$, there is a collection $\dd' \subseteq \dd_{Q_0}$, 
and an open set $\oT$  of the form
\begin{equation}
\label{induction region} \oT := \interior\left(\cup_{Q_j \in \dd'} U_{Q_j}\right) \subset T_{Q_0}\,,
\end{equation}
which has an ADR boundary and satisfies the strong 2-sided Corkscrew condition for open sets with Corkscrew balls lying in $V(Q_0)$ (see \eqref{buffer zone}),  and in addition 
\begin{equation}
\label{big pieces of omega} \sigma(\partial\oT \cap Q_0) \ge \eta_a \sigma(Q_0).
\end{equation}
Moreover, each connected component of $\oT$ is an NTA domain with ADR boundary with uniform constants possibly depending on $a$.

To prove the Theorem it is enough to show that 
$H(M_0)$ holds where $M_0$ is the Carleson norm of $\mut$, that is, $M_ 0 = C_{\eta,K}$ in \ref{lemma2.1}. 
We do this by showing first that $H(0)$ holds and 
then that $H(a)$ implies $H(a+b)$ for some fixed constant $b$ depending only on dimension and the ADR
constants. This way we will only use finitely many steps to get to $H(M_0)$. 
We set $b = \frac{\gamma}{C}$ where $C$ is the constant in \ref{Corona-sawtooth} and 
$\gamma$ is a small positive number to be chosen.

The fact that $H(0)$ holds is somewhat trivial since this would imply that $Q_0$ and all of its descendants are in the same stopping time regime and therefore $Q_0$ coincides with a Lipschitz graph. Here we can also directly apply the results in \cite{HMM2}. We are then left with showing that $H(a)$ implies $H(a + b)$. 

\smallskip

\noindent{\bf Proof that $H(a) \implies H(a+b)$:} Suppose that $H(a)$ holds and that $Q_0 \in  \dd(E)$ be such that $\mut(\dd_{Q_0}) \le (a + b) \sigma(Q_0)$. First let $C_2$ be an integer so large that if $Q_1 \subseteq Q_2$ with $k(Q_2) + C_2 - 5 < k(Q_1) $ then we have that $U_{Q_2} \cap V(Q_1) = \emptyset$. We obtain via 
Lemma \ref{lemma:Corona} a collection $\F = \{Q_j \}_{j=1}^\infty$ such that 
\begin{equation}\label{eq4.15}
 \|\mut_\F \|_{C(Q_0)} \le Cb = \gamma 
 \end{equation}
and
\begin{equation}\label{eq4.16}
\sigma(B) \le \frac{a+b}{a+2b} \sigma(Q_0)\,,
\end{equation}
where $B$ is the union of those $Q_j$ in $\mathcal{F}$ such that 
$\mathfrak{m}(\mathbb{D}_{Q_j} \setminus \{Q_j\}) > a \sigma(Q_j)$, call this collection $\mathcal{F}_{bad}$.
Define $\mathcal{F}_{good} := \mathcal{F} \setminus \mathcal{F}_{bad}$.  
Then by pigeon-holing, 
for each $Q_j \in \mathcal{F}_{good}$ we may find a child of $Q_j$ to which we can 
apply the induction hypothesis $H(a)$. Iterating the pigeon-holing argument,
we may find a cube $Q_j^\prime$ that is $C_2$ generations down from $Q_j$
(i.e., so that $\ell(Q_j') = 2^{-C_2} \ell(Q_j)$), 
to which we may apply the induction hypothesis.

\begin{remark}\label
{remark4.16}
Choosing $\gamma$, small enough  in \eqref{eq4.15}
(in fact, $\gamma = 1/2$ will suffice), 
we obtain that
$\dd_{\F,Q_0}$ does not contain any $Q^\prime \in \M \cup \B$, where $\M := \{Q(\sbf)\}_\sbf$.
Fixing such a $\gamma$, we find therefore that
every $Q'\in \dd_{\mathcal{F},Q_0}$ is a good cube, and moreover all such 
$Q'$ belong to the same  ${\bf S}$, a
stopping time regime as in Lemma \ref{lemma2.1}. 
Thus, $ \dd_{\mathcal{F},Q_0}$ is a semi-coherent
 (see Definition \ref{d3.11})  subregime  of that {\bf S}, and therefore
$\om_{\mathcal{F},Q_0}$ splits into two disjoint NTA domains with ADR boundary,
 by Lemma \ref{subregime sawtooths are NTA}. 
\end{remark} 

 We may clearly assume that $a < M_0$. Set $\eta :=  1 - \frac{M_0 + b}{M_0 + 2b}$ and $A:= Q \setminus (\cup_\mathcal{F} Q_j)$ and $G : = (\cup_\mathcal{F} Q_j) \setminus B$. Then we see immediately that 
$$\sigma(A \cup G) \ge \eta \sigma(Q_0).$$
If $\sigma(A) > (\eta/2) \sigma(Q_0)$, we set $\oT = \om_{\mathcal{F},Q_0}$,
and note that  $A \subseteq \partial\oT$.   Then $H(a+b)$ holds in this case, by Remark \ref{remark4.16}.

Therefore it is enough to consider the case when $\sigma(G) \ge (\eta/2) \sigma(Q_0)$. 
Suppose first that  $\mathcal{F}= \{Q_0\}$.   Then necessarily, $Q_0\in \F_{good}$,
and in this case we may apply the induction hypothesis to a child of $Q_0$ to see that 
$H(a+ b)$ holds  
(possibly with smaller constant in the Corkscrew condition and larger ADR constant.)
Thus, we may assume that the collection $\mathcal{F} \neq \{Q_0\}$.
We now apply Lemma \ref{covering lemma} to $\F_{good}$ with $\kappa \ge \Upsilon$, where $\kappa$ is
 to be chosen momentarily, and where
$\Upsilon$ is the constant in Lemma \ref{separation of buffer zones}, 
to obtain a 
subcollection $\widetilde{\F} \subset \F_{good}$ with the following properties:
\begin{align*}
\sigma(\cup_{\widetilde{\mathcal{F}}} \,Q_i) &\gtrsim \sigma(\cup_{\mathcal{F}_{good}} Q_j) \ge  (\eta/2) \sigma(Q_0)  \\
\dist(Q_i, Q_k) &\ge \kappa \max \{\ell(Q_i), \ell(Q_k)\}\,, 
\quad \forall Q_i,Q_k \in \widetilde{\mathcal{F}},\quad i \neq k.
\end{align*}

Now  for each $Q_j \in \widetilde{\F}$, we define two families as follows:  
let $Q_j^\ast$ be the parent of $Q_j$, and  let $Q'_j$ be the cube $C_2$ generations down to which we can apply the induction hypothesis. Now set $\F ' := \{Q'_j\}_{Q_j\in \widetilde{\F} }$, and $\F^\ast 
:= \{Q^\ast_j \}_{Q_j\in\widetilde{\F}}$. Notice first that all of the cubes $Q_j^\ast$ are in 
$\dd_{\F,Q_0}$ and that $\F '$ has the same properties as $\widetilde{\F}$ namely
\begin{equation}\label{little domains are a big portion}
\sigma(\cup_{\F'} Q_i^\prime)\, \gtrsim\, \sigma(\cup_{\mathcal{F}_{good}} Q_j)\, 
\gtrsim \,(\eta/2) \sigma(Q_0)  
\end{equation}
and
\begin{equation} \label{separation Q primes}
\dist(Q'_i,Q'_k) \ge \kappa \max \{\ell(Q'_i), \ell(Q'_k)\}\,, 
\quad \forall Q_i^\prime,Q_k^\prime \in \mathcal{F}',\quad i \neq k.
\end{equation}
For each $Q'_j \in \F'$ we apply the induction hypothesis to obtain an open set as in $H(a)$ and 
call this set $\oT_j$.

Next, we construct two 	``large" NTA domains with ADR boundary with the help of Lemma 3.24 in [HMM]. For each $Q'_j \in \mathcal{F}'$, let $\I_j$ be the collection of fattened Whitney 
cubes $I^*$ such that $\interior(I^\ast) \subset \om_{\F, Q_0}$, and $I^*$ meets $V(Q'_j)$. Let $\mathcal B_j$ be the 
collection of $Q\in \dd_{Q_0}$ such that there 
exists an $I^\ast \subset U_Q$ with $I^\ast \in \I_j$. Now 
we define $\mathcal{F}_\infty$ as the cubes in $\mathcal{F} \cup (\cup_j \mathcal{B}_j)$ 
which are maximal with respect to containment.

By construction (see Remark \ref{remark4.16}), 
 $\sbf' := \dd_{\F_\infty, Q_0}$ is a semi-coherent subregime of some stopping time regime
 ${\bf S}$
 as in Lemma \ref{lemma2.1}.
 Thus, setting $\oT_0 = \om_{F_\infty, Q_0}$, by Lemma \ref{lemma3.15} we obtain that 
 $\om_0$ is the union of two disjoint NTA domains with ADR boundaries,  
 whose diameters are comparable to $\ell(Q_0)$. 
 
 We will need to know 
 that we did not remove too many cubes, namely, 
 we do not want to remove any $Q_j^\ast \in \F^\ast$.
\begin{claim}If $\kappa$ is chosen large enough then for
every $j \ge 1$ we have for each $Q_j^\ast \in \mathcal{F}^*$ that $Q_j^\ast \in \dd_{\F_\infty, Q_0}$.
\end{claim}

\begin{proof}[Proof of claim:] Note that by our choice of $C_2$, $U_{Q}$ does not meet $V(Q'_j)$, for 
any $Q\supseteq Q_j^*$.  Moreover, by construction, we have that $Q^*_j \notin \mathcal{F}$. Suppose now for the purposes of contradiction, that there is a $j \ge 1$ such that $Q^*_j \subseteq Q$, with $Q \in \F_\infty$. Then there exists a $k \neq j$ such that $Q \in \B_k$ and $U_{Q}$ meets $V(Q'_k)$ for some $Q'_k \in \mathcal{F}'$. Then we have immediately that $\ell(Q) \lesssim \ell(Q'_k) < \ell(Q_k)$ and $\dist(Q'_k,Q) \lesssim \ell(Q'_k) < \ell(Q_k)$ so that
$$\dist(Q'_k, Q'_j) \lesssim \dist(Q'_k, Q) + \ell(Q) \lesssim \ell(Q_k)$$
a contradiction for $\kappa$ large enough. We remind the reader that we are only considering the cubes extracted with a covering lemma so that they separated (see Lemma \ref{covering lemma} and (\ref{separation Q primes})).
\end{proof}

 \begin{remark}\label{remark4.18} 
We note for future reference that 
$V(Q_j')\cap  (\oT_0\cup\partial\oT_0) =\emptyset$, for every $j\geq1$.  Indeed, this follows by our observation, in
the preceding paragraph, that by choice of $C_2$ large enough,
$U_{Q}$ does not meet $V(Q'_j)$, for 
any $Q\supseteq Q_j^*$.   We further note that $\overline{V(Q_j')}\cap \overline{V(Q_k')}=\emptyset$, 
for all $j\neq k$, with $j,k\geq 1$,
by \eqref{separation Q primes}, Lemma \ref{separation of buffer zones}, and our choice of
$\kappa\geq \Upsilon$.
\end{remark}

Recall that for $j\geq 1$, $\oT_j$ is the open set associated to $Q'_j\in \F'$ via the induction hypothesis, 
and $\oT_0= \om_{F_\infty, Q_0}$. 
We now set $\oT := \cup_{j=0}^\infty \oT_j$. 
We shall show that $\oT$ has all the desired properties. First we show that $\oT$ satisfies a 2-sided Corkscrew condition. Note that by  Remark \ref{remark4.18},  the distinct components of $\oT_j$ remain distinct components in $\oT$. We let $x \in \partial\oT$ and $0< r \le \ell(Q_0)$,  let $M$ be a large number to be chosen, and set $\delta(x) = \dist(x,E)$. Since $\partial\oT \subseteq \cup_{j=0}^\infty \partial \oT_j$ we break into two cases:

\smallskip

\noindent\textbf{Case 1:} $x \in \partial\oT_0$.   
Recall that $\oT_0$ splits into two NTA domains  $\oT_0^\pm$.
Moreover, following the construction in  \cite{HMM2}, the
interior and exterior corkscrew 
points for the domain $\oT_0^+$ are found as follows.
Without loss of generality we may assume that $x \in \partial\oT_0^+$.
For M sufficiently large (depending 
only on allowable constants) the argument distinguishes between 
two cases, when $r < M \delta(x)$  and when $r \ge M\delta(x)$. In the case that $r \ge M\delta(x)$ we find one Corkscrew point in the domain $\oT_0^-$ and one Corkscrew point in $\oT_0^+$,
and these serve as Corkscrew points in separate components for $\oT$ as well. 
In the case that $r < M\delta(x)$, 
we have that $\delta(x) > 0$, and $x$ lies on the face of a fattened 
Whitney cube $I^*$ whose interior lies in $\oT_0^+$. 
Moreover, $x \in J$, for some 
Whitney cube $J \not\in (\cup_{Q \in \dd_{\mathcal{F}_\infty, Q_0}} \W_Q)$;  
we then have one 
Corkscrew point in $I^*$, 
and  a second in $J \setminus \oT_0^+$.   Clearly, the first of these is also a Corkscrew point
for $\oT$, in the component $\oT_0^+$.  To see that the second is a Corkscrew point relative
to $\partial \oT$, 
it remains to show that 
$J$ misses $\partial \oT\setminus \partial \oT_0^+$.  If not,
then $J$ must  intersect $\partial\oT_j$ 
for some $j\geq 1$, 
and therefore $T_{Q'_j}$ meets $J$, so that $J \subset V(Q'_j)$.   On the other hand, $J$
also meets $I^\ast$, so that $I^\ast \in \I_j$, hence there is a cube $Q\in \dd_{\mathcal{F}_\infty, Q_0}$
that belongs to $\mathcal{B}_j$, a contradiction.

\smallskip

\noindent
\textbf{Case 2:} $x \in \partial\oT_j$ for $j \ge 1$. Here $\oT_j$ is associated to $Q'_j$. 

\smallskip

\noindent \textbf{Case 2a:} $r \ge M \ell(Q_j^\prime)$. In this case,
since $Q^*_j \in \dd_{\F_\infty, Q_0}$, 
we have that $\text{int}(U^\pm_{Q^\ast_j}) \subset \oT_0^\pm$, 
so for $M$ large enough depending on $C_2$,
there exists  points $y^\pm \in \partial \oT^\pm_0$, with $|x-y^\pm|<r/2$.
 We may then apply Case 1 to each of the balls $B(y^\pm,r/2)\subset B(x,r)$.

\smallskip

\noindent \textbf{Case 2b:} $r < M\ell(Q'_j)$. 
From the induction hypothesis we have two Corkscrew 
balls $B_1$ and $B_2$ 
that satisfy the strong 2-sided 
Corkscrew condition, at scale $r/M$,  relative to the open set $\oT_j$
(see Definition \ref{def3.cork}).
Without loss of generality we may assume that $B_1 \subset \oT_j$, hence also
$B_1\subset \oT$.
We must show that these Corkscrew balls 
satisfy the strong 2-sided Corkscrew condition for 
open sets, with $\oT$ the open set in question, thus, it remains to show that
$B_1, B_2 \subset \ree\setminus \partial\oT$.   To this end, we simply observe that
$B_1$ and $B_2$ do not meet $\partial\oT_j$ by hypothesis, nor 
they do not meet $\partial\oT_0$ or $\partial\oT_i$ for some $i \neq j$, by Remark
\ref{remark4.18}, since
by the induction hypothesis $B_1, B_2$ are in $V(Q'_j)$, and since $\partial\oT_i\subset \overline{V(Q'_i)}$,
by construction.

\smallskip  

Next, we observe that $\partial \oT_0$ is ADR, by \cite[Appendix A]{HMM2}, and that
each $\partial\oT_j$ is ADR, with uniform control of the
ADR constants,  by the induction hypothesis.  Thus,
we are left with showing that $\oT$ has ADR boundary and that condition \ref{big pieces of omega} holds. 
We begin by verifying the upper ADR condition for $\partial\oT$.
Let $x \in \partial\oT$ and $0 < r \le \diam(Q_0)$. Since 
$\partial\oT \subseteq \cup_{j=0}^\infty \partial \oT_j$,
\begin{equation}\label{subadd}
H^n(B(x,r) \cap \partial\oT) \le \sum_{j=0}^\infty H^n(B(x,r) \cap \partial\oT_j)
\end{equation}
If $B(x,r)$ meets $\partial\oT_0$, then there is an $x_0 \in B(x,r) \cap \partial\oT_0$, 
and $B(x_0,2r) \supset B(x,r)$.  By the ADR property for $\oT_0$
\begin{equation}\label{ADR eq1}
H^n(B(x,r) \cap \partial\oT_0) \le H^n(B(x_0, 2r) \cap \partial\oT_0) \lesssim r^n\,.
\end{equation}
 Next, we consider the contributions of $ \partial\oT_j$, $j\geq 1$, which we write as
$$\sum_{j=1}^\infty H^n(B(x,r) \cap \partial\oT_j) = \sum_{j: \, \ell(Q_j')\,>\,r}\,+\,  
\sum_{j: \, \ell(Q_j')\,\leq\, r}=: I +II\,.$$
We recall that by construction $\partial\oT_j\subset \overline{V(Q_j')}$, so by
Remark \ref{remark4.18}, the boundaries $\partial\oT_j$ are pairwise disjoint.
Thus, only a bounded number of terms can appear in the sum $I$,
so the desired bound $I\lesssim r^n$ follows by the ADR property of 
each $\partial\oT_j$.
Moreover, for each $j$, the diameter of $\oT_j$ is comparable to $\ell(Q_j')$,
and therefore 
the cubes $Q_j'$ appearing in
$II$ are all contained in $B(x,Cr)$, for some sufficiently large constant $C$ depending only on 
allowable parameters.  Consequently, by the ADR property of $\partial\oT_j$ and of $E$,
$$II \lesssim \sum_{j:\, Q_j'\subset B(x,Cr)} H^n(\partial\oT_j)\,\approx \!
\sum_{j:\, Q_j'\subset B(x,Cr)} \sigma(Q_j')\, \leq \,\sigma(E\cap B(x,Cr)) \,\approx r^n\,.$$  
Thus, we obtain the upper ADR bound. 

With the upper ADR bound in hand we now know that $\partial\oT$ is a set of locally finite perimeter, 
so by the isoperimetric inequality \cite[p. 222]{EG}, 
and the strong two sided Corkscrew condition for open sets, the lower ADR bound follows.
The last and easiest thing to show is that condition \ref{big pieces of omega} 
holds for $\oT$,
but this follows readily from \ref{little domains are a big portion}, and the fact that
by the induction hypothesis, \ref{big pieces of omega} 
hold for each $Q'_j$.
\end{proof}

\section{Proof of Theorem \ref{t2}}

The proof of Theorem $\ref{t2}$ will be immediate from the following Lemma

\begin{lemma}\label{t2lemma} Let $\om$ be a bounded open set in $\ree$, 
with $n$-dimensional ADR boundary, such that 
$\om := \cup_j \om_j$ is the union of its connected components $\om_j$.
Suppose further that each component is a chord arc domain with uniform 
bounds on the chord arc constants. Then for $x_0 \in \partial\om$ and 
$r_0 < \tfrac{1}{2}\diam{\om}$, and for $Y \in \om \setminus B(x_0,2r_0)$, let $\hm^Y$ denote 
the harmonic measure associated to $\om$, and set $\Delta_0 := B(x_0,r_0) \cap \pom$. Then 
$\hm^Y \in $ weak-$A_\infty(\Delta_0)$, with uniform control on the weak-$A_\infty$ constants.
\end{lemma}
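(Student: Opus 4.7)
The plan is to reduce the weak-$A_\infty$ claim for $\om$ to the classical chord-arc theory on the single connected component of $\om$ that contains the pole. Let $\om_\ast$ denote the component with $Y\in\om_\ast$. Since the components $\om_j$ are pairwise disjoint open sets, the harmonic extension in $\om$ of any Borel datum on $\pom$ restricts on $\om_\ast$ to the harmonic extension of the datum restricted to $\partial\om_\ast$. Hence
\[
\hm^Y_\om(F)\,=\,\hm^Y_{\om_\ast}(F\cap\partial\om_\ast),\qquad F\subseteq\pom\ \text{Borel},
\]
so $\hm^Y$ is supported on $\partial\om_\ast$ and coincides there with harmonic measure for the chord-arc domain $\om_\ast$.

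Next, fix a surface ball $\Delta=B(x,r)\cap\pom$ with $2B\subseteq B_0$ and a Borel set $F\subseteq\Delta$. If $F\cap\partial\om_\ast=\emptyset$ the claim is trivial; otherwise pick any $x_\ast\in B(x,r)\cap\partial\om_\ast$ and pass to the surface ball
\[
\widetilde\Delta\,:=\,B(x_\ast,2r)\cap\partial\om_\ast
\]
on $\partial\om_\ast$, which contains $F\cap\partial\om_\ast$. Because $\om_\ast$ is chord-arc with uniform constants, the classical theorems of Dahlberg \cite{D1}, David--Jerison \cite{DJe}, and Semmes \cite{Se} give that $\hm_{\om_\ast}$ belongs to $A_\infty$ with respect to $\sigma_\ast:=H^n|_{\partial\om_\ast}$ on every surface ball of $\partial\om_\ast$. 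Since
\[
|Y-x_\ast|\,\ge\,|Y-x_0|-|x_0-x_\ast|\,\ge\,2r_0-r_0\,=\,r_0,
\]
while $r\le r_0/2$, the pole $Y$ lies in the far regime relative to $\widetilde\Delta$; a standard change-of-pole argument in the NTA domain $\om_\ast$ transfers the $A_\infty$ bound to pole $Y$, yielding
\[
\hm^Y(F\cap\partial\om_\ast)\,\le\,C\left(\frac{\sigma_\ast(F\cap\partial\om_\ast)}{\sigma_\ast(\widetilde\Delta)}\right)^{\!\theta}\hm^Y(2\widetilde\Delta).
\]

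To close the estimate I would transfer both sides back to $\sigma$ and $2\Delta$. The uniform ADR of $\partial\om_\ast$ and the ADR of $\pom$ give
\[
\sigma_\ast(F\cap\partial\om_\ast)\,\le\,\sigma(F),\qquad \sigma_\ast(\widetilde\Delta)\,\approx\,r^n\,\gtrsim\,\sigma(\Delta),
\]
so the ratio of surface measures on $\partial\om_\ast$ is dominated by $\sigma(F)/\sigma(\Delta)$. Next, $B(x_\ast,r)\subseteq B(x,2r)$ yields $\hm^Y(B(x_\ast,r)\cap\partial\om_\ast)\le\hm^Y(2\Delta)$, and the doubling property of $\hm^Y$ in the NTA domain $\om_\ast$ centred at $x_\ast\in\partial\om_\ast$ gives $\hm^Y(2\widetilde\Delta)\lesssim\hm^Y(B(x_\ast,r)\cap\partial\om_\ast)$. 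Combining these inequalities produces the claimed weak-$A_\infty$ estimate.

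The main obstacle is the bookkeeping that matches surface balls and measures on $\pom$ with those on $\partial\om_\ast$: $\om$ itself is neither connected nor NTA, so the Dahlberg--David--Jerison--Semmes machinery cannot be invoked directly, and one must insert an auxiliary centre $x_\ast\in\partial\om_\ast$ and control the resulting scale mismatch between $\sigma$ and $\sigma_\ast$ through the two ADR properties. A related, purely technical subtlety arises when $r$ is comparable to $r_0$: a single invocation of weak-$A_\infty$ for $\om_\ast$ requires the pole to be outside a definite dilate of $\widetilde\Delta$, and the gap between this requirement and the hypothesis $Y\in\om\setminus B(x_0,2r_0)$ must be absorbed by a bounded number of doubling and Harnack steps in $\om_\ast$.
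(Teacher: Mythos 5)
Your proposal is correct and follows essentially the same route as the paper: reduce to the single chord-arc component $\om_\ast$ containing the pole, recenter the surface ball at a point of $\partial\om_\ast$ inside $B(x,r)$, invoke the David--Jerison/Semmes $A_\infty$ result for that component, and convert between $\sigma$ and $H^n|_{\partial\om_\ast}$ using the two ADR properties. Your bookkeeping (auxiliary ball of radius $2r$ plus one doubling step for $\hm^Y$) is a slightly more careful variant of the paper's choice of $\Delta_\star=B(z,r)\cap\partial\om_j$, but the argument is the same.
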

\begin{proof}  Fix $B_0:= B(x_0,r_0)$, with $x_0\in \pom$, and $r_0 < \diam(\pom)$.
Set $\Delta = \Delta(x,r)$ and $2\Delta = \Delta(x,2r)$, with $x\in\pom$, and suppose that 
$B(x,2r)\subset B_0$ (thus, $r_0\geq 2r$).   Recalling the definition of weak-$A_\infty$,
we need to show there exist uniform positive constants $C$ and $\theta$ such that for each Borel set
$A \subseteq \Delta$ 
\begin{equation}\label{aihmeq1}
\hm^Y(A) \le C \left(\frac{\sigma(A)}{\sigma(\Delta)}\right)^\theta \hm(2\Delta)\,,
\end{equation}
whenever $Y\in \Omega \setminus B(x_0,2r_0)$.  Let us fix such a point $Y$.
We note that $Y \in \om_j$ for some $j$ and therefore $\hm^Y$ is just the 
harmonic measure associated to the domain $\om_j$.   Thus, if $A \cap \pom_j = \emptyset$, 
then \ref{aihmeq1} holds trivially.   We may therefore 
assume that this is not the case. Let $z \in A \cap \pom_j$, set $A' = A \cap \pom_j$, and set $\sigma_j = H^n |_{\pom_j}$. Notice that $\dist(z,Y) \ge r_0 \geq 2r$, since in particular,
$z\in  B_0$, while $Y\in \om\setminus 2B_0$.  Thus,
the diameter of $\om_j$ must be greater than $r$.
Moreover, by the result obtained independently in \cite{DJe} and in \cite{Se}, 
$\hm^Y \in A_\infty(\Delta_\star)$, where
$\Delta_\star = B(z,r) \cap \om_j$. 
Note that $\Delta_\star \subset 2\Delta$, and also that 
by the uniform ADR property, $\sigma(\Delta) \approx \sigma_j(\Delta_\star) = \sigma(\Delta_\star)$. 

Since $\hm^Y \in A_\infty(\Delta_\star)$, we have that 
\begin{align} \nonumber
\hm^Y(A) &=\hm^Y(A') \\ \nonumber
&\leq C \left(\frac{\sigma_j(A')}{\sigma_j(\Delta_\star)}\right)^\theta\hm^Y(\Delta_\star)\\ \nonumber
& =C \left(\frac{\sigma(A')}{\sigma(\Delta_\star)}\right)^\theta\hm^Y(\Delta_\star) 
\, \lesssim  \left(\frac{\sigma(A)}{\sigma(\Delta)}\right)^\theta\hm^Y(2\Delta).
\end{align}
\end{proof}

To prove Theorem \ref{t2}, we apply the preceding
lemma to each domain $\widetilde{\om}$ constructed in Theorem \ref{t1}. 
We need only verify that the point $Y\in \widetilde{\om} \setminus 2B_0$ may be replaced by
any Corkscrew point $X_{\Delta_0}\in \widetilde{\om}$, relative to the surface ball
$\Delta_0$.  To this end, we fix such a Corkscrew point $X_{\Delta_0}$,
and observe that $X_{\Delta_0}\in \widetilde{\om}\setminus 2B_1$, 
for any ball $B_1$ meeting $B_0$, centered on $\partial\widetilde{\om}$,
with radius $r_1= \eta r_0$, if $\eta>0$ is chosen small enough
depending only on the Corkscrew constant for $\widetilde{\om}$.
Then for each such $B_1$, by Lemma \ref{t2lemma},
$\hm_0:=\hm^{X_{\Delta_0}}$ belongs to weak-$A_\infty(\Delta_1)$,
where $\Delta_1= B_1\cap \partial\widetilde{\om}$.  Now suppose that $B(x,2r)\subset B_0$,
with $x\in  \partial\widetilde{\om}$, and observe that $B:=B(x,r)$ may be covered by 
a collection of balls $\mathcal{F}=\{B^i: =B(x^i,r^i)\}$, of bounded cardinality depending only on dimension and $\eta$,
such that $r^i\approx r$, $2B^i\subset 2B$, and each $B^i$ is contained in some ball $B_1=B_1^i$ 
as above.  Set $\Delta = B\cap \partial\widetilde{\om}$, $\Delta^i := B^i\cap \partial\widetilde{\om}$,
and for $A\subset \Delta$, let $A_i:= A\cap \Delta^i$.   We then apply \eqref{aihmeq1} to each $A_i, \Delta^i$,
and use that for each $i$,
$H^n(\Delta) \approx H^n(\Delta^i)$  (depending on $\eta$ and the ADR constants).
Since $\#\mathcal{F}$ is bounded, we may then sum in $i$ to obtain Theorem \ref{t2}.

We conclude by observing that Lemma \ref{t2lemma}, and hence Theorem \ref{t1},
also apply to the Riesz measure (``$p$-harmonic measure")
associated to the $p$-Laplace equation $$\Delta_p u := \dv\big(|\nabla u|^{p-2} \nabla u\big) =0\,.$$
\begin{definition}\label{phm1}($p$-harmonic measure). Let $\om \subset \ree$ be open. For $x \in \pom$, $0 < r < \tfrac{1}{8}\diam(\om)$,
suppose that $u \ge 0$, 
with $u \equiv 0$ on $\pom$, $\Delta_p u = 0$ in $4B \cap \om$. We define the $p$-harmonic 
measure $\mu$ associated to $u$, as the unique finite positive Borel measure such that
\begin{equation}\label{phm2}
-\iint_\om |\nabla u|^{p-2} \nabla u \cdot \nabla \Phi \, dx = \int_{\pom} \Phi \, d\mu \quad \forall \Phi \in C_0^\infty(4B).
\end{equation}
\end{definition}

\begin{proposition} Let $\om$ be a bounded open set in $\ree$, 
with $n$-dimensional ADR boundary, such that 
$\om := \cup_j \om_j$ is the union of its connected components $\om_j$, and
satisfies a strong 2-sided corkscrew condition. 
Suppose further that each component $\om_j$
is a chord arc domain with uniform bounds on the chord arc constants. 
Then for $x \in \partial\om$ and $r < \tfrac{1}{8}\diam(\om)$, let $u,\,\mu$ 
be as above, set $\hat{\mu} :=  \mu|_{\partial\om_j}$ and let $\hat{c}$ be the corkscrew constant for the set $\om$. 
If $\diam(\om_j) > 2\hat{c}r$, then $\hat{\mu} \in$ weak-$A_\infty(\Delta)$. 
In particular, given $x \in \pom$ and $r \in (0,\tfrac{1}{8}\diam(\om))$, if we
let $\om_\Delta$ be some component which contains an interior corkscrew point relative to the ball $B(x,r)$, then
$\mu|_{\pom_\Delta} \in$ weak-$A_\infty(\Delta)$.
\end{proposition}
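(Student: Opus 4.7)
The argument would closely mirror that of Lemma \ref{t2lemma}, with the Dahlberg--David--Jerison--Semmes $A_\infty$ theorem replaced by its $p$-harmonic analog of Lewis and Nystr\"om. The first step is to identify $\hat\mu := \mu\big|_{\partial\om_j}$ as the $p$-harmonic measure of the chord-arc domain $\om_j$ associated to $u\big|_{\om_j}$, in the sense of Definition \ref{phm1}. Indeed, given any test function $\Phi \in C_0^\infty(4B)$, the left-hand side of \eqref{phm2} decomposes as a sum over the connected components $\om_k$ (since $|\nabla u|^{p-2}\nabla u \cdot \nabla \Phi$ is supported in $\om$), and the right-hand side likewise decomposes as a sum over $\pom_k$. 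Restricting attention to test functions whose supports meet $\pom$ only in a neighborhood of $\pom_j$ isolates the contribution of $\om_j$, yielding
\begin{equation*}
-\iint_{\om_j} |\nabla u|^{p-2}\nabla u \cdot \nabla \Phi \, dx = \int_{\pom_j} \Phi\, d\hat\mu, \qquad \Phi \in C_0^\infty(4B),
\end{equation*}
so that $\hat\mu$ is indeed the $p$-harmonic measure of $\om_j$ associated to $u\big|_{\om_j}$.

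Next, I would invoke the Lewis--Nystr\"om theorem (the $p$-harmonic counterpart of \cite{DJe} and \cite{Se}), which asserts that in a chord-arc domain $D$, any $p$-harmonic measure arising from a nonnegative $p$-harmonic function vanishing on $\partial D$ lies in $A_\infty$ with respect to surface measure on $\partial D$, with constants depending only on $p$ and the chord-arc character of $D$. Applied to $\om_j$ and $u\big|_{\om_j}$, this delivers $\hat\mu \in A_\infty(\Delta_\star)$ on every surface ball $\Delta_\star$ of $\pom_j$ of radius $\lesssim \diam(\om_j)$.

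I would then convert this to the desired weak-$A_\infty$ estimate on $\Delta = B(x,r)\cap \pom$ in exact analogy with the final paragraph of the proof of Lemma \ref{t2lemma}. Assuming $\hat\mu(\Delta) > 0$ (else the claim is trivial), pick $z \in \Delta \cap \pom_j$ and set $\Delta_\star = B(z,r)\cap \pom_j$. The diameter hypothesis $\diam(\om_j) > 2\hat c r$ ensures that $\Delta_\star$ is a genuine surface ball of $\pom_j$ at a scale where the Lewis--Nystr\"om result applies, while uniform ADR gives $\sigma(\Delta) \approx \sigma_j(\Delta_\star)$ and $\Delta_\star \subset 2\Delta$. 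Consequently, for any Borel set $A \subset \Delta$,
\begin{equation*}
\hat\mu(A) = \hat\mu(A\cap \pom_j) \lesssim \Bigl(\frac{\sigma_j(A\cap \pom_j)}{\sigma_j(\Delta_\star)}\Bigr)^\theta \hat\mu(\Delta_\star) \lesssim \Bigl(\frac{\sigma(A)}{\sigma(\Delta)}\Bigr)^\theta \hat\mu(2\Delta).
\end{equation*}

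The ``in particular'' clause is then immediate: if $\om_\Delta$ contains an interior corkscrew point for the ball $B(x,r)$, the corkscrew condition forces $\diam(\om_\Delta) > 2\hat c r$, so the diameter hypothesis is automatic. The one genuinely nontrivial ingredient is locating and quoting the correct version of the Lewis--Nystr\"om $A_\infty$ theorem for $p$-harmonic measure on chord-arc domains; once this black box is granted, the remainder is a direct transcription of the proof of Lemma \ref{t2lemma}.
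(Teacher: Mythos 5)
Your proposal is correct and follows exactly the route the paper takes: the paper's proof simply says ``the proof is the same as that of Lemma \ref{t2lemma},'' with the Lewis--Nystr\"om $A_\infty$ theorem for $p$-harmonic measure in chord-arc domains substituted for the result of \cite{DJe}, \cite{Se}. Your write-up supplies somewhat more detail (the localization of the Riesz measure to a single component and the verification that the corkscrew hypothesis implies the diameter bound) than the paper records, but the argument is the same.
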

\begin{proof}
The proof is the same as that of Lemma \ref{t2lemma}. Lewis and Nystr\"om \cite{LN} showed 
that for an NTA domain $D$ with ADR boundary, if $\mu$ is $p$-harmonic measure associated to a non-trivial
positive $p$-harmonic function $u$ in $B(x,4r)\cap D$, with $x\in \partial D$, $r< \diam(D) /4$,
then $\mu \in A_\infty(\partial D\cap B(x,r))$, with respect to
$\sigma_D:= H^n|_{\partial D}$; we use this fact in place of the result of \cite{DJe}, \cite{Se}. 
\end{proof}

\end{document}